\titleformat{\section}{\large\bfseries}{\thesection.}{1em}{}
\newtheorem{theorem}{Theorem}[section]
\newtheorem{lemma}[theorem]{Lemma}
\newtheorem{corollary}[theorem]{Corollary}
\newtheorem{conjecture}[theorem]{Conjecture}
\title{The $m$-Degenerate Chromatic Number of a Digraph}
\author{Noah Golowich\thanks{Research Science Institute, Massachusetts Institute of Technology, Cambridge, MA}}
\date{January 1, 2016}
\begin{document}
\maketitle





\begin{abstract}
The {\it digraph chromatic number} of a directed graph $D$, denoted $\chi_A(D)$, is the minimum positive integer $k$ such that there exists a partition of the vertices of $D$ into $k$ disjoint sets, each of which induces an acyclic subgraph. For any $m \geq 1$, a digraph is {\it weakly $m$-degenerate} if each of its induced subgraphs has a vertex of in-degree or out-degree less than $m$. We introduce a generalization of the digraph chromatic number, namely $\chi_m(D)$, which is the minimum number of sets into which the vertices of a digraph $D$ can be partitioned so that each set induces a weakly $m$-degenerate subgraph. We show that for all digraphs $D$ without directed 2-cycles, $\chi_m(D) \leq \frac{2{\Delta}(D)}{4m+1} + O(1)$. Because $\chi_1(D) = \chi_A(D)$, we obtain as a corollary that $\chi_A(D) \leq 2/5 \cdot {\Delta}(D) + O(1)$. We then use this bound to show that $\chi_A(D) \leq \sqrt{2/3} \cdot \tilde{\Delta}(D) + O(1)$, substantially improving a bound of Harutyunyan and Mohar that states that $\chi_A(D) \leq (1 - e^{-13})\cdot \tilde\Delta(D)$ for large enough $\tilde\Delta(D)$.
\end{abstract}

\tikzset{
    every node/.style={
        circle,
        draw,
        solid,
        fill=black!50,
        inner sep=0pt,
        minimum width=4pt
    }
}

\section{Introduction}
A proper {\it vertex coloring} of an undirected graph partitions its vertices into independent sets. To extend this notion to directed graphs (digraphs), we consider acyclic sets instead of independent sets. An {\it acyclic set} in a digraph is a set of vertices whose induced subgraph contains no directed cycle. The {\it digraph chromatic number} of a digraph $D$, denoted $\chi_A(D)$, is then defined to be the minimum number of acyclic sets into which the vertices of $D$ can be partitioned. The digraph chromatic number was originally defined by Neumann-Lara in the early 1980's \cite{neumann_vertex_1985}. In this paper, we primarily consider {\it oriented graphs}, which are digraphs such that at most one edge connects any pair of vertices.

Many upper bounds on the chromatic number of undirected graphs are phrased in terms of $\Delta(G)$, the maximum degree of $G$.
To extend this notion to directed graphs, there are a few options which measure the maximum degree. Given a digraph $D$, $\tilde{\Delta}(D)$ is the maximum geometric mean of the in-degree and the out-degree of a vertex in $D$, $\Delta(D)$ is the maximum total degree of a vertex in $D$,  and $\bar{\Delta}(D)$ is the maximum arithmetic mean of the in-degree and the out-degree of a vertex in $D$. Notice that for any digraph $D$, we have $ \tilde{\Delta}(D) \leq \bar{\Delta}(D) = \Delta(D)/2$.

The digraph chromatic number $\chi_A(D)$ is one of many chromatic numbers which have been defined for digraphs. Bokal et al.~\cite{circular_bokal_2004} introduced the circular chromatic number of a digraph $D$, denoted $\chi_c(D)$, as a generalization of the digraph chromatic number. Let $S_p$ denote the circle with perimeter $p$, and for $x, y \in S_p$, let $d(x, y)$ denote the clockwise distance from $x$ to $y$. Then the circular chromatic number $\chi_c(D)$ is the infimum of all positive real numbers $p$ for which there exists a function $c: V(D) \rightarrow S_p$ such that for each edge $uv$ of $D$, $d(c(u), c(v)) \geq 1.$ Bokal et al.~showed that $\chi_c(D)$ takes on rational values, and moreover, that $\chi_A(D) - 1 < \chi_c(D) \leq \chi_A(D)$. 

The digraph chromatic number has received the most attention among colorings of directed graphs because recent results \cite{acyclic_aharoni_2008, ben_size_2012, circular_bokal_2004, chen_coloring_2014, two_harut_2012, digraph_keevash_2013} suggest that the digraph chromatic number in digraphs behaves similarly to the chromatic number in undirected graphs. 
Much still remains to be learned however. For instance, it is easily proved using the greedy algorithm that $\chi_A(D) \leq \lfloor \tilde{\Delta}(D) \rfloor + 1 \leq \lfloor {\Delta}(D)/2 \rfloor + 1$; this is analogous to the fact that in an undirected graph $G$, $\chi(G) \leq \Delta(G) + 1$. However, this bound is not tight for most digraphs.

In the case of undirected graphs, Brooks \cite{brooks_coloring_1941} made the first improvement on the obvious bound of $\chi(G) \leq \Delta(G) + 1$; he showed that $\chi(G) \leq \Delta(G)$ unless $G$ is a complete graph or an odd cycle. Borodin and Kostochka \cite{borodin_upper_1977} and Catlin \cite{catlin_bound_1978} then independently strengthened Brooks' theorem, showing that if $G$ is triangle-free, then $\chi(G) \leq \left\lceil\frac{3(\Delta(G) + 1)}{4}\right\rceil$. Mohar \cite{mohar_eigen_2009} recently proved an analogue of Brooks' theorem for all digraphs. It follows from Mohar's results that if $D$ is an oriented graph with $\Delta(D) > 2$, then $\chi_A(D) \leq \lceil{\Delta}(D)/2\rceil$.
However, it appears that this upper bound on $\chi_A(D)$ can be significantly improved further; Harutyunyan and Mohar \cite{two_harut_2012} credit McDiarmid and Mohar with the following conjecture.
\begin{conjecture}[McDiarmid \& Mohar, 2002 \cite{two_harut_2012}]
\label{conj:everything}
Every oriented graph $D$ satisfies {$\chi_A(D) = O\left(\frac{{\Delta}(D)}{\log {\Delta}(D)}\right)$}.
\end{conjecture}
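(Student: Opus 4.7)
The plan is to adapt the probabilistic machinery used to prove $\chi(G) = O(\Delta/\log \Delta)$ for triangle-free undirected graphs (Johansson's theorem, or Molloy's recent streamlined entropy-compression version). The hypothesis that $D$ is oriented, i.e., contains no digons, plays a role here analogous to the triangle-free hypothesis in the undirected setting: it is the local sparsity one hopes to exploit in order to beat the trivial greedy bound by a logarithmic factor.

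First I would fix $k = C\bar{\Delta}/\log \bar{\Delta}$ for a suitably large constant $C$ and aim to construct a $(k,1)$-degenerate coloring directly. A uniformly random $c: V(D)\to [k]$ induces color classes whose mean degree is $\Theta(\log \bar{\Delta})$ in expectation, which is the correct target. Next I would set up the Lov\'asz Local Lemma: for each vertex $v$, define the bad event $A_v$ that $v$ lies on a monochromatic directed cycle, and bound both $\Pr[A_v]$ and the dependency structure among the $A_v$. Finally, I would exploit the no-digon structure: $N^+(v)$ and $N^-(v)$ are disjoint, so any monochromatic directed cycle through $v$ must use one in-neighbor and one distinct out-neighbor, and the pairs $(u,w)\in N^-(v)\times N^+(v)$ that can close a cycle through an internal monochromatic path are quite constrained. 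This sparsity should yield the extra logarithmic savings, in the spirit of how triangle-freeness does in the undirected case.

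The main obstacle is that bad events here are indexed by directed cycles of arbitrary length rather than by edges, so controlling $\Pr[A_v]$ and the dependency graph is delicate: long monochromatic cycles are individually unlikely, but there are many of them. A workable circumvention is a two-stage scheme. Stage one: apply a strengthening of the paper's main theorem, with $m$ allowed to grow slowly with $\bar{\Delta}$, to obtain a $(k,m)$-degenerate coloring whose $k$ is already of the correct order of magnitude. Stage two: within each weakly $m$-degenerate class, exploit the degeneracy ordering—in which every vertex has small in- or out-degree into the earlier vertices—to run a simpler randomized subdivision into $O(m/\log m)$ acyclic subclasses by iteratively refining the color classes. Alternatively, one could attempt an iterated wasteful coloring or nibble process, coloring a small random subset at each step, removing newly created monochromatic cycles, and tracking the decay of the residual mean degree; establishing the requisite concentration in the directed setting—where the relevant "bad" structures are not local edges but paths of unbounded length—is where the genuine difficulty lies.
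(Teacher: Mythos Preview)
The statement you are trying to prove is \emph{Conjecture}~\ref{conj:everything}, not a theorem. The paper does not prove it and does not claim to; it is presented as an open problem, and the concluding remarks explicitly say that ``the bound in Theorem~\ref{thm:fracdelta} differs from the conjectured bound in Conjecture~\ref{conj:everything} by a factor of $\log \bar{\Delta}(D)$. It seems that a new technique is necessary to obtain the additional factor of $\log \bar{\Delta}(D)$, if it is indeed correct.'' So there is no proof in the paper to compare your attempt against.

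Your proposal is not a proof but a research plan, and you acknowledge as much. Two concrete issues are worth flagging. First, the analogy ``no digons $\leftrightarrow$ triangle-free'' is weaker than you suggest: the paper already works exclusively with oriented graphs (no digons) and, via a constructive Borodin--Kostochka/Catlin-type decomposition, extracts only a constant-factor gain ($\frac{4}{5}\bar{\Delta}$), while Harutyunyan and Mohar's earlier attempt with the pseudo-random method yielded only $(1-e^{-13})\widetilde{\Delta}$. The absence of digons does not by itself furnish the kind of neighborhood sparsity that drives Johansson's argument; in the undirected setting, triangle-freeness forces each neighborhood to be an independent set, whereas here $N^+(v)$ and $N^-(v)$ being disjoint says nothing about edges inside either set. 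Second, your two-stage scheme is circular: Stage~2 asks you to acyclically color a weakly $m$-degenerate oriented digraph with $O(m/\log m)$ colors, but the only general bound available is $\chi_A \le m+1$ (Bokal et al.), and improving this to $O(m/\log m)$ is precisely the conjecture again at a smaller scale. The degeneracy ordering gives each vertex fewer than $m$ in- or out-neighbors among earlier vertices, which supports the greedy $m+1$ bound but does not obviously interact with a random refinement to save a $\log m$ factor.
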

Conjecture \ref{conj:everything} is analogous to a result for undirected graphs by Kim \cite{kim_brooks_1995}, who showed that \mbox{$\chi(G) \leq (1 + o(1)) \frac{\Delta(G)}{\log \Delta(G)}$} if the girth (length of the shortest cycle) of $G$ is greater than 4. Johansson \cite{johannson_asymptotic} later extended Kim's bound to graphs $G$ of girth greater than 3, and Jamall \cite{jamall_brooks_2011} has since used a simpler proof to strengthen Johansson's bound by a constant factor. Kim, Johansson, and Jamall all used the probabilistic method to prove upper bounds on the chromatic number. 

Harutyunyan and Mohar \cite{strengthened_harut_2011} applied the probabilistic method to digraphs to show that $\chi_A(D) \leq (1 - e^{-13})\tilde{\Delta}(D)$ for $\tilde\Delta(D)$ large enough, which only slightly improves upon the trivial bound of $\chi_A(D) \leq \lfloor \tilde{\Delta}(D) \rfloor + 1$ and is far from the bound given in Conjecture \ref{conj:everything}. They also posed the following related conjecture, which, although much weaker than Conjecture \ref{conj:everything} for $\tilde{\Delta}(D)$ large, gives a precise bound for all $\tilde{\Delta}(D)$, unlike Conjecture \ref{conj:everything}. 
\begin{conjecture}[Harutyunyan \& Mohar, 2011 \cite{strengthened_harut_2011}]
\label{conj:harutdelta}
Let $D$ be an oriented graph. Then $\chi_A(D) \leq \left\lceil \tilde{\Delta}(D)/2 \right\rceil + 1.$
\end{conjecture}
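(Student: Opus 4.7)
My plan is to attack Conjecture~\ref{conj:harutdelta} via the pseudo-random (semi-random) method, in the spirit of Kim, Johansson, and Jamall for the undirected case and as adapted to the directed setting by Harutyunyan and Mohar. Write $\Delta = \widetilde{\Delta}(D)$ and target $k = \lceil \Delta/2 \rceil + 1$ colors. First I would assign each vertex of $D$ an initial color independently and uniformly at random from $\{1,\dots,k\}$. For any vertex $v$ and color $i$, the numbers of in-neighbors and out-neighbors of $v$ colored $i$ are sums of independent $\mathrm{Bernoulli}(1/k)$ variables with expectations $d^-(v)/k$ and $d^+(v)/k$, whose product is at most $\widetilde{\Delta}(D)^2/k^2 \leq 4 + o(1)$; Chernoff-type concentration then controls these quantities simultaneously at every vertex.

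Next, for each vertex $v$ and each color $i$ I would define a bad event $A_{v,i}$ capturing either that the monochromatic in-degree or out-degree of $v$ deviates substantially from its mean, or that $v$ lies on a short monochromatic directed cycle, and apply the Lov\'asz Local Lemma on a suitably sparsified dependency graph to conclude that with positive probability a ``well-distributed'' coloring exists whose color classes are close to being acyclic. I would then iterate: in each round, recolor any vertex currently lying in a directed cycle of its color class, choosing a new color uniformly from a carefully maintained ``safe palette''. Tracking the palette size from round to round, the goal is to show that after $O(\log \Delta)$ rounds every color class is weakly $1$-degenerate, hence acyclic. Along the way, the fact that $D$ is an oriented graph (so that monochromatic $2$-cycles are impossible for free) should reduce the count of bad events enough to make the LLL conditions go through.

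The main obstacle, and the reason I believe the conjecture remains open, is that the target bound is extraordinarily tight: at $k \approx \Delta/2$ the expected product of monochromatic in-degree and out-degree at a vertex is only about $4$, which is nowhere near enough on its own to forbid directed cycles, since, for instance, a directed $3$-cycle has this product equal to $1$ yet is not acyclic. Pseudo-random arguments typically pay at least a logarithmic overhead in the number of colors, so an optimal execution of the plan above would most plausibly yield only $\chi_A(D) = O(\widetilde{\Delta}(D)/\log \widetilde{\Delta}(D))$, enough for Conjecture~\ref{conj:everything} but still weaker than the sharp linear coefficient $1/2$ demanded here. Closing the remaining gap appears to require either a Brooks-type structural classification of the oriented graphs approaching equality, or a genuinely combinatorial argument exploiting the absence of directed $2$-cycles beyond merely bounding counts; I expect this second ingredient to be the true bottleneck, and accordingly the strategy above is much more likely to establish the weaker asymptotic statement of Conjecture~\ref{conj:everything} than the precise constant $1/2$ in Conjecture~\ref{conj:harutdelta}.
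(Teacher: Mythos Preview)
The statement you are trying to prove is labeled a \emph{Conjecture} in the paper, and the paper does not prove it. It makes partial progress only: Corollary~\ref{cor:acyclic} gives $\chi_A(D) \leq \lfloor \tfrac{4}{5}\,\bar{\Delta}(D) + \tfrac{2}{5}\rfloor + 1$, which is still far from the coefficient $\tfrac{1}{2}$ and is phrased in terms of $\bar{\Delta}$ rather than $\widetilde{\Delta}$. So there is no ``paper's own proof'' to compare against.

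As for your proposal, it is not a proof but a plan, and you yourself correctly identify the fatal gap. At $k \approx \widetilde{\Delta}/2$ colors, the expected monochromatic in- and out-degrees at a vertex are only bounded by a constant, so concentration and the Local Lemma cannot by themselves rule out monochromatic directed cycles; the pseudo-random iteration you describe is exactly the machinery Harutyunyan and Mohar already used, and it produced only $(1-e^{-13})\widetilde{\Delta}$. Your honest concluding paragraph is accurate: the method, even if executed optimally, is expected to give $O(\widetilde{\Delta}/\log\widetilde{\Delta})$ (Conjecture~\ref{conj:everything}), not the sharp $\tfrac{1}{2}$. Nothing in the sketch supplies the missing structural ingredient.

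It is worth noting that the paper's actual progress toward Conjecture~\ref{conj:harutdelta} takes a completely different, constructive route: a Lov\'asz-type vertex partition (Corollary~\ref{cor:dirlov}) combined with a directed Brooks-type result for $(k,m)$-critical oriented graphs (Theorem~\ref{thm:kmcritical}). This yields the explicit constant $\tfrac{4}{5}$ with no probabilistic slack, whereas your semi-random approach trades an explicit constant for an asymptotic $\log$-factor gain that it cannot actually secure here.
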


In this paper, we use the following generalization of the digraph chromatic number and deduce new bounds on the digraph chromatic number itself as a special case of our results. For a positive integer $m$, a digraph $D$ is said to be {\it weakly $m$-degenerate} if for every induced subgraph of $D$, there is a vertex of out-degree or in-degree strictly less than $m$. Therefore, a digraph is weakly 1-degenerate if and only if it is acyclic. Given a positive integer $k$, a {\it $(k, m)$-degenerate coloring} of $D$ is a partition of $V(D)$ into $k$ sets, each of which induces a weakly $m$-degenerate subgraph. More generally, an {\it$m$-degenerate coloring} of $D$ is a partition of $V(D)$ into some number of sets, each of which induces an $m$-degenerate subgraph. Given a positive integer $m$, we let $\chi_m(D)$, the {\it $m$-degenerate chromatic number} of $D$, be the smallest positive integer $k$ such that $D$ has a $(k, m)$-degenerate coloring. Notice that $\chi_1(D) = \chi_A(D)$; hence the parameter $\chi_m(D)$ is a generalization of $\chi_A(D)$. Bokal et al.~\cite{circular_bokal_2004} showed some further connections between weak degeneracy and digraph colorings. For instance, if a digraph is weakly $m$-degenerate, then $\chi_A(D) \leq m+1$; moreover, this bound is tight for each positive integer $m$.

In Theorem \ref{thm:fracdelta}, we prove an upper bound on the $m$-degenerate chromatic number of any digraph $D$ in terms of ${\Delta}(D)$. 

\begin{theorem}
\label{thm:fracdelta}
Let $m$ be a positive integer. For any oriented graph $D$, we have
\begin{equation}
\chi_m(D)  \leq \left\lfloor \frac{{\Delta}(D)  -  \left\lfloor \frac{{\Delta}(D) + 1}{4m + 1} \right\rfloor}{2m}\right\rfloor + 1\nonumber.
\end{equation}
\end{theorem}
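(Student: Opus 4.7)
The plan is to proceed by induction on $|V(D)|$, using a greedy coloring extension. Let $k$ denote the right-hand side of the claimed inequality. The key reduction is: if some vertex $v \in V(D)$ satisfies $\min(d^-(v), d^+(v)) \leq km - 1$, and the inductive hypothesis supplies a $(k,m)$-degenerate coloring of $D - v$, then this coloring extends to $D$. A color $c$ is forbidden for $v$ only when at least $m$ in-neighbors and at least $m$ out-neighbors of $v$ already carry color $c$, so the number of forbidden colors is at most $\lfloor\min(d^-(v), d^+(v))/m\rfloor \leq k-1$, and some color remains available. The easier direction of the inductive step is then just to pick such a $v$ and delete it, noting that $\bar{\Delta}(D - v) \leq \bar{\Delta}(D)$ so the bound (which is monotone in $\bar{\Delta}$) is preserved.

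The substantive task is therefore to locate, in every oriented graph $D$ with $\bar{\Delta}(D) \geq 2m$, a vertex $v$ with $\min(d^-(v), d^+(v)) \leq km - 1$. The naive estimate $\min(d^-(v), d^+(v)) \leq (d^-(v) + d^+(v))/2 \leq \bar{\Delta}(D)$ only recovers the trivial greedy bound $\lfloor \bar{\Delta}/m \rfloor + 1$; to improve the threshold to $km - 1 \approx \bar{\Delta} - t/2$, I would exploit the absence of directed 2-cycles. Since $N^-(v) \cap N^+(v) = \emptyset$ in an oriented graph, the in- and out-neighborhoods of each vertex are truly disjoint, giving tighter control on the global degree structure. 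The plan is to set up an averaging argument on $\sum_v \min(d^-(v), d^+(v))$ in the ``dense'' case where every vertex has $\min(d^-(v), d^+(v)) \geq km$, and derive a contradiction that leverages the oriented property. The denominator $2m + 1/2$ in the definition of $t$ hints at a weighted averaging that balances in- and out-degree contributions with a half-integer correction, possibly coming from an integrality/rounding step or a concavity bound such as $\min(a,b) \leq (a+b)/2 - |a-b|/2$.

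The main obstacle is calibrating this averaging argument to produce precisely the floor expressions stated, particularly matching the two nested floors with the constants $2m + 1/2$ and $1/2$. The peeling must also handle the boundary case where deleting $v$ drops $\bar{\Delta}(D - v)$ below $2m$; this is managed by invoking the trivial greedy bound $\chi_m(D') \leq \lfloor \bar{\Delta}(D')/m \rfloor + 1$ directly and verifying that it is dominated by the claimed $k$ in these regimes. Careful bookkeeping is needed to ensure that the combined inductive argument recovers the exact stated formula rather than a weaker asymptotic estimate of the form $\tfrac{4\bar{\Delta}}{4m+1} + o(\bar{\Delta})$.
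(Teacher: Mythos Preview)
Your proposal has a fatal gap: the central existence claim---that every oriented graph $D$ with $\bar{\Delta}(D) \geq 2m$ contains a vertex $v$ with $\min(d^-(v), d^+(v)) \leq km - 1$---is false. Take a regular tournament on $2\Delta + 1$ vertices, where every vertex has out-degree and in-degree exactly $\Delta$. This is an oriented graph with $\bar{\Delta}(D) = \Delta$, and $\min(d^-(v), d^+(v)) = \Delta$ for \emph{every} vertex. Since for large $\Delta$ one has $km \approx \tfrac{4m\Delta}{4m+1} < \Delta$, no vertex meets your threshold, and no averaging argument on $\sum_v \min(d^-(v),d^+(v))$ can produce one: the oriented property (disjointness of $N^+(v)$ and $N^-(v)$) imposes no upper bound on $\min(d^-(v),d^+(v))$ beyond $\bar{\Delta}(D)$ itself. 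Your greedy-peeling induction therefore only recovers the trivial bound $\lfloor \bar{\Delta}(D)/m \rfloor + 1$.

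The paper's proof is structurally different and does not proceed by locating a single removable vertex. It first establishes a directed Brooks-type statement (Lemma~\ref{lem:brooksdeg}, resting on Theorem~\ref{thm:kmcritical} and Lemma~\ref{lem:2separator}): any oriented $D$ with $\bar{\Delta}(D) \leq km$ and $k \geq 2$ satisfies $\chi_m(D) \leq k$. The regular case that defeats your approach is handled there by constructing a special vertex ordering, not by a degree-averaging argument. The paper then applies Lov\'asz's decomposition (Corollary~\ref{cor:dirlov}) to split $V(D)$ into $s = \lfloor (\bar{\Delta}+1/2)/(2m+1/2) \rfloor$ pieces with $\bar{\Delta}(D_i) \leq 2m$ plus one leftover piece; each small piece receives $\chi_m \leq 2$ via the Brooks-type lemma, the leftover gets the trivial greedy bound, and summing yields the stated formula. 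The constants $2m + 1/2$ and $1/2$ that you were trying to interpret as a ``half-integer averaging correction'' come directly from the Lov\'asz identity $\bar{\Delta}(D) = (s-1)/2 + \sum_i \bar{\Delta}_i$ in the decomposition step.
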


By taking $m = 1$, the following corollary follows immediately from Theorem \ref{thm:fracdelta}.

\begin{corollary}
\label{cor:acyclic}
If $D$ is any oriented graph, $\chi_A(D) \leq \left\lfloor {2}/{5}\cdot({\Delta}(D) + 1) \right\rfloor + 1.$
\end{corollary}

Using Theorem \ref{thm:fracdelta}, we then find an upper bound on $\chi_m(D)$ in terms of $\tilde\Delta(D)$ (see Theorem \ref{cor:mdegtilde}). In the $m = 1$ case, we obtain the below corollary, which significantly improves Harutyunyan and Mohar's bound of $\chi_A(D) \leq (1-e^{-13}) \cdot \tilde{\Delta}(D)$ and makes progress towards Conjecture \ref{conj:harutdelta}.

\begin{corollary}
\label{cor:improvedbound}
For any oriented graph $D$, we have $\chi_A(D) \leq \left\lfloor \sqrt{\frac{2}{3}} \cdot \tilde{\Delta}(D) + \frac{7}{5} \right\rfloor$.
\end{corollary}

The organization of this paper is as follows. In Section \ref{sec:colorings}, we prove Theorem \ref{thm:fracdelta}, giving an upper bound on $\chi_m(D)$ for any digraph $D$ in terms of ${\Delta}(D)$. In Section \ref{sec:betbounds}, we improve upon this bound for $m= 1$ for a particular class of digraphs. We give some concluding remarks in the final section.

\section{Digraph colorings}
\label{sec:colorings}
Recall that Harutyunyan and Mohar \cite{strengthened_harut_2011} proved that given a digraph $D$, if $\tilde{\Delta}(D)$ is large enough, then $\chi_A(D) \leq (1 - e^{-13})\tilde{\Delta}(D)$. They used a non-constructive method to do so, and posed the problem of improving this bound, remarking that a different technique may be necessary. In this section, we use a constructive technique to prove Theorem \ref{thm:fracdelta}, which we then use to prove (Corollary \ref{cor:improvedbound}) the significantly stronger upper bound of $\chi_A(D) \leq \sqrt{2/3} \cdot \tilde{\Delta}(D) + O(1)$. 

It is easy to show that $\chi_m(D) \leq \left\lfloor \frac{{\Delta}(D)}{2m}\right \rfloor + 1$; the proof is similar to that of the fact that \mbox{$\chi_A(D) \leq \lfloor{\Delta}(D)/2\rfloor + 1$}. In particular, we color the vertices of $D$ greedily, in any order. At each step, the next vertex $v$ to be colored has either out-degree or in-degree at most ${\Delta}(D)/2$, suppose without loss of generality out-degree. Therefore, there are at most $\left \lfloor \frac{{\Delta}(D)}{2m} \right \rfloor$ colors which are present in at least $m$ out-neighbors of $v$. We now color $v$ using one of the remaining colors that is present in fewer than $m$ out-neighbors of $v$. The resulting coloring is indeed $m$-degenerate, since in any subset of any color class, the vertex in that subset colored last must have fewer than $m$ in-neighbors or out-neighbors in that subset.
Note that Theorem \ref{thm:fracdelta} improves the bound $\chi_m(D) \leq \left\lfloor \frac{{\Delta}(D)}{2m}\right \rfloor + 1$.

We prove Theorem \ref{thm:fracdelta} by using a strategy similar to one originally introduced independently by Borodin and Kostochka \cite{borodin_upper_1977} and by Catlin \cite{catlin_bound_1978} to prove an upper bound on the chromatic number in undirected graphs. The proof in the case $m = 1$ is quite short, so we provide a sketch of it here. Given an oriented graph $D$, by a theorem of Lov\'asz \cite{lovasz_on_1966} (Theorem \ref{thm:lovasz}), we can partition the vertices of $D$ into $s := \lfloor (\Delta(D) + 1)/5 \rfloor + 1$ sets, each inducing a digraph of maximum total degree of at most 4. If we can 2-color each of the resulting $s$ digraphs, then by using a different set of 2 colors for each one we obtain a digraph coloring of $D$ with at most $\frac{2}{5} \cdot \Delta(D) + O(1)$ colors.

To show that we can 2-color digraphs $D$ of maximum total degree at most 4, suppose not and let $D$ be a counter-example with a minimum number of vertices. (We will later define such a digraph to be (3,1)-critical.) If some vertex $v$ has in-degree less than 2, we can remove it, color the digraph induced by the remaining vertices of $D$ with 2 colors, and extend that coloring to $v$ by using the color not in the in-neighborhood of $v$. This produces a 2-coloring of $D$, contradicting our original assumption, so $D$ has no vertex with in-degree less than 2. By reversing all edges of $D$, we obtain that no vertex of $D$ has out-degree less than 2. Hence the in-degree and out-degree of each vertex of $D$ is exactly 2. Then by a theorem of Mohar \cite{mohar_eigen_2009} (Theorem \ref{thm:moharkcritical}), we obtain a contradiction to $D$ not being 2-colorable, so $D$ is in fact 2-colorable.

To prove Theorem \ref{thm:fracdelta} in its full generality, we follow an outline similar to the one described above. We begin by generalizing a directed graph analogue of Brooks' theorem \cite{brooks_coloring_1941} due to Mohar \cite{mohar_eigen_2009} to the framework of $m$-degenerate colorings. Our proof follows a similar outline to that of Mohar, who proved that any oriented graph $D$ with as few vertices as possible that satisfies $\chi_A(D) > \lceil \Delta(D)/2 \rceil$ must be a directed cycle. (Mohar in fact proved a more general result, classifying all such digraphs $D$ even if digons are allowed.) The key step of Mohar's proof is to find a degeneracy ordering of the vertices of $D$ which allows one to construct a coloring of $D$ with one fewer color than $\chi_A(D)$, unless $D$ is a directed cycle.

A few definitions are needed to state our next lemma. Given a digraph $D$ and $u \in V(D)$, we denote the subgraph induced on $V(D)\backslash \{u\}$ by $D - u$.  Similarly, for $X$ a set of vertices, $D - X$ is the subgraph induced on $V(D)\backslash X$.
Moreover, given a positive integer $m$, a {\it critical} vertex is a vertex $v \in V(D)$ such that $\chi_m(D - v) < \chi_m(D)$. If every vertex of $D$ is critical and $\chi_m(D) = k$, then we define $D$ to be a {\it $(k, m)$-critical} digraph. 
Our next lemma, Lemma \ref{lem:critvertex}, shows that critical vertices in a digraph must have large in-degree and out-degree.

\begin{lemma}
\label{lem:critvertex}
Suppose $v$ is a critical vertex in a digraph $D$, $m \geq 1$, and $\chi_m(D) = k$. Then $d^+(v) \geq (k-1)m$ and $d^-(v) \geq (k-1)m$.
\end{lemma}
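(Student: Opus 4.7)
The plan is to prove both inequalities by contrapositive, showing that if $d^+(v) < (k-1)m$ then $\chi_m(D) \leq k-1$, contradicting $\chi_m(D) = k$. The argument for $d^-(v)$ will be entirely symmetric, obtained by swapping the roles of out-neighbors and in-neighbors throughout.

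Since $v$ is critical, $\chi_m(D - v) \leq k - 1$, so I fix a $(k-1, m)$-degenerate coloring of $D - v$ with color classes $V_1, \ldots, V_{k-1}$. The strategy is to place $v$ into some class $V_j$ without destroying weak $m$-degeneracy, which would extend this to a $(k-1, m)$-degenerate coloring of $D$ and yield the contradiction.

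The key observation I plan to use is: if $|N^+(v) \cap V_j| < m$ for some $j$, then $V_j \cup \{v\}$ remains weakly $m$-degenerate. To verify this, I would take any induced subgraph $S$ of $V_j \cup \{v\}$ and split on whether $v \in S$. If $v \notin S$, then $S \subseteq V_j$, and weak $m$-degeneracy of $V_j$ supplies a vertex of $S$ with in- or out-degree less than $m$. If $v \in S$, then $v$ itself has out-degree in $S$ at most $|N^+(v) \cap V_j| < m$. Either way, $S$ contains the required low-degree vertex.

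To locate such a $j$, I invoke pigeonhole: under the hypothesis $d^+(v) < (k-1)m$, the sum $\sum_{j=1}^{k-1} |N^+(v) \cap V_j|$ equals $d^+(v) < (k-1)m$, so some integer summand is at most $m-1$. This completes the contradiction. The main subtlety to watch for is that weak $m$-degeneracy is a property of every induced subgraph of the color class, not merely of the ambient set, but the case split on whether $v \in S$ resolves this cleanly, and I do not anticipate any further obstacle.
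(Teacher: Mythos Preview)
Your argument is correct and is essentially identical to the paper's proof: both take a $(k-1,m)$-degenerate coloring of $D-v$, use pigeonhole on the out-neighbors of $v$ to find a class $V_j$ with $|N^+(v)\cap V_j|<m$, insert $v$ into $V_j$, and verify weak $m$-degeneracy of $V_j\cup\{v\}$ via the same case split on whether $v$ lies in the given induced subgraph. The symmetric treatment of $d^-(v)$ also matches.
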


\begin{proof}
Suppose for the purpose of contradiction that $d^+(v) < (k-1)m$. We will show that we can find a $(k-1, m)$-degenerate coloring of $D$, a contradiction to the fact that $\chi_m(D) = k$. Since $v$ is $(k, m)$-critical, we can find a $(k-1, m)$-degenerate coloring of $D - v$.  At least one color $c$ must be present in fewer than $m$ out-neighbors of $v$ because otherwise $v$ would have at least $(k-1)m$ out-neighbors. Now we color $v$ with $c$, and claim that the subgraph $H$ induced by all vertices of color $c$ is $m$-degenerate. To see this, let $H'$ be an induced subgraph of $H$. If $v \in V(H')$, then notice that $v$ has at most $m-1$ out-neighbors in $H'$. Otherwise, note that $H'$ is a subset of a color class in a $(k-1, m)$-degenerate coloring of $D - v$, meaning that there is some vertex in $H'$ of in-degree or out-degree less than $m$. 

By reversing all edges in $D$, we symmetrically obtain that $d^-(v) \geq (k-1)m$.
\end{proof}

A digraph is {\it weakly connected} if the underlying undirected graph is connected. 
Our next lemma states that we only need to consider the weakly connected components of a digraph to find its $m$-degenerate chromatic number.
\begin{lemma}
\label{lem:trivial}
If $D$ is a digraph and $D_1, \ldots, D_l$ are its weakly connected components for some positive integer $l$, then for any $m \geq 1$, $\chi_m(D) = \max_{1 \leq i \leq l} \chi_m(D_i)$.
\end{lemma}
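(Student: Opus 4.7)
The plan is to establish the two inequalities $\chi_m(D) \geq \chi_m(D_i)$ for each $i$ and $\chi_m(D) \leq \max_{1\leq i\leq l}\chi_m(D_i)$ separately; both should follow directly from the fact that weak $m$-degeneracy is defined by a condition over all induced subgraphs, together with the observation that there are no arcs of $D$ between distinct weakly connected components.

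For the lower bound direction, I would note that each $D_i$ is an induced subgraph of $D$. The restriction of any $(k,m)$-degenerate coloring of $D$ to $V(D_i)$ is a partition of $V(D_i)$ into $k$ sets, each of which is an induced subgraph of a weakly $m$-degenerate digraph; since the condition ``every induced subgraph has a vertex of in-degree or out-degree less than $m$'' is hereditary, these restricted color classes are themselves weakly $m$-degenerate. Hence $\chi_m(D_i) \leq \chi_m(D)$ for every $i$, so $\max_i \chi_m(D_i) \leq \chi_m(D)$.

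For the upper bound direction, set $k = \max_i \chi_m(D_i)$ and for each $i$ fix a $(k,m)$-degenerate coloring $c_i\colon V(D_i) \to \{1,\ldots,k\}$. Combine these into a single map $c\colon V(D) \to \{1,\ldots,k\}$ by setting $c(v) = c_i(v)$ for $v \in V(D_i)$. The key step I would emphasize is that since the $D_i$ are weakly connected components, there are no arcs of $D$ between $V(D_i)$ and $V(D_j)$ for $i \neq j$. Therefore, given any color class $C$ of $c$ and any induced subgraph $H$ of $D[C]$, the vertex set of $H$ splits as $V(H) = \bigsqcup_i (V(H)\cap V(D_i))$, and for any $v\in V(H)\cap V(D_i)$ its in-degree and out-degree inside $H$ agree with those inside the induced subgraph of $H$ on $V(H)\cap V(D_i)$. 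Picking any index $i$ with $V(H)\cap V(D_i)\neq \emptyset$ and applying the weak $m$-degeneracy of the corresponding color class of $c_i$ yields a vertex of $H$ with in-degree or out-degree less than $m$ in $H$, establishing that $C$ is weakly $m$-degenerate.

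There is no genuine obstacle here; the only point that needs to be made with care is the ``no arcs between components'' observation that lets degree counts inside $H$ be computed component by component. Putting the two inequalities together gives $\chi_m(D) = \max_{1 \leq i \leq l} \chi_m(D_i)$.
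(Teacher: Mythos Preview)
Your proof is correct and follows essentially the same approach as the paper: combine $(k,m)$-degenerate colorings of the components into one coloring of $D$, using the fact that there are no arcs between distinct weakly connected components. The paper's proof is actually terser than yours---it only states the upper-bound direction explicitly and does not spell out the verification that each color class of the composite coloring is weakly $m$-degenerate---so your version is, if anything, more complete.
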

\begin{proof}
Let $k = \max_{1 \leq i \leq l} \chi_m(D_i)$. We can find a $(k, m)$-degenerate coloring of each $D_i$, for $1 \leq i \leq l$, and the resulting composite coloring is a $(k, m)$-degenerate coloring of $D$ since there is no edge between any two weakly connected components of $D$.
\end{proof}
By Lemma \ref{lem:trivial}, a digraph $D$ which is $(k, m)$-critical is also weakly connected.
Our next result, Theorem \ref{thm:kmcritical} states that the in-degree and out-degree of every vertex cannot be too small in a $(k, m)$-critical oriented graph with $k > 2$.

\begin{theorem}
\label{thm:kmcritical}
Suppose that $m \geq 1$ and $D$ is a $(k, m)$-critical oriented graph on $n$ vertices in which each vertex $v$ satisfies $d^+(v) = d^-(v) = (k-1)m$. Then $k \leq 2$.
\end{theorem}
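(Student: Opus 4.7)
The plan is to suppose for contradiction that $k \geq 3$ and to construct a $(k-1, m)$-degenerate coloring of $D$, contradicting $\chi_m(D) = k$. By Lemma \ref{lem:2separator}, I may fix vertices $u_1, \ldots, u_{m+1}, u_n \in V(D)$ such that, without loss of generality, $u_1, \ldots, u_{m+1}$ are all out-neighbors of $u_n$ and $D' := D - \{u_1, \ldots, u_{m+1}\}$ is weakly connected. Since every vertex of $D$ is critical, $\chi_m(D - u_1) \leq k-1$, and hence the induced subgraph $D'$ inherits a $(k-1, m)$-degenerate coloring $c$ with color classes $S_1, \ldots, S_{k-1}$.

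I would then extend $c$ to $D$ by greedily assigning colors to $u_1, u_2, \ldots, u_{m+1}$ in order. At step $i$, I would seek a color $j \in \{1, \ldots, k-1\}$ such that $u_i$ has strictly fewer than $m$ already-colored in-neighbors of color $j$ or strictly fewer than $m$ already-colored out-neighbors of color $j$; the new color class $S_j \cup \{u_i\}$ is then weakly $m$-degenerate, since prepending $u_i$ to the degeneracy ordering of $S_j$ works. Because $d^+(u_i) = d^-(u_i) = (k-1)m$, a pigeonhole over the $k-1$ colors yields such a color whenever at least one in- or out-neighbor of $u_i$ is still uncolored --- in particular, whenever $u_i$ has an in- or out-neighbor among the later $u_\ell$ with $\ell > i$.

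The main obstacle lies in the last index $i = m+1$ (and any earlier index for which no later $u_\ell$ is an in- or out-neighbor), since then all neighbors of $u_i$ are already colored and each of the $k-1$ colors could account for exactly $m$ of the $(k-1)m$ in-neighbors and exactly $m$ of the $(k-1)m$ out-neighbors, foiling the naive pigeonhole. To resolve this tight case, I would modify the coloring $c$ via a Kempe-chain swap: choose two colors $j_1, j_2$ and a weakly connected component $K$ of the digraph induced by $S_{j_1} \cup S_{j_2}$ containing a strategically chosen in- or out-neighbor of $u_i$, then swap $j_1 \leftrightarrow j_2$ on $K$, altering the color counts at $u_i$ so that the pigeonhole bound becomes strict. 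The hard part will be verifying that each color class remains weakly $m$-degenerate after the swap and that a suitable component $K$ can always be found; this is expected to require case analysis similar in flavor to Case 2 in the proof of Lemma \ref{lem:2separator}, and to exploit both the oriented-graph hypothesis (to control two-cycles that would obstruct the swap) and the weak connectedness of $D'$ through $u_n$ (to guarantee a single useful component $K$ exists).
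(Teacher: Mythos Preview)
Your plan diverges from the paper's argument at the point where you take an arbitrary $(k-1,m)$-degenerate coloring of $D'$ (inherited from criticality) and try to \emph{extend} it to $u_1,\ldots,u_{m+1}$. This throws away exactly the leverage that Lemma~\ref{lem:2separator} was designed to give you, and forces you into the Kempe-swap scheme whose ``hard part'' you yourself flag as unresolved.

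The paper avoids the obstacle altogether by reversing the order of construction. It does \emph{not} inherit a coloring of $D'$; instead it builds a $(k-1,m)$-degenerate coloring of all of $D$ from scratch along a specific linear ordering $u_1,\ldots,u_n$. The first $m+1$ vertices are the out-neighbors $u_1,\ldots,u_{m+1}$ of $u_n$ provided by Lemma~\ref{lem:2separator}, and they are all assigned the \emph{same} color. Weak connectedness of $D'=D-\{u_1,\ldots,u_{m+1}\}$ is then used (in the usual Brooks fashion) to list the remaining vertices as $u_{m+2},\ldots,u_{n-1},u_n$ so that every $u_i$ with $m+2\le i\le n-1$ has a later neighbor, hence in- or out-degree strictly below $(k-1)m$ among its predecessors; these are colored greedily. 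Finally $u_n$ is colored last: since $m+1$ of its $(k-1)m$ out-neighbors already share one color, pigeonhole gives some color with at most $m-1$ out-neighbors of $u_n$. No tight case ever occurs.

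In your approach, by contrast, the special relationship between $u_n$ and $u_1,\ldots,u_{m+1}$ is never exploited---$u_n$ is already colored inside $D'$ and plays no role when you insert the $u_i$. You could just as well have deleted any $m+1$ vertices, which is why the pigeonhole can be exactly tight at \emph{every} insertion step, not only the last. The Kempe-swap repair you propose is not obviously doomed (swapping on a weak component of $S_{j_1}\cup S_{j_2}$ does preserve weak $m$-degeneracy, since the component is edge-disjoint from the rest of those two classes), but you give no mechanism for locating a component $K$ that changes the in-/out-counts at $u_i$ in the desired direction, nor for handling the possibility that the swap disturbs the placements of earlier $u_\ell$. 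The paper's ordering trick sidesteps all of this.
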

Theorem \ref{thm:kmcritical} is of particular interest since it generalizes the following theorem of Mohar, who proved the case $m = 1$, which is a statement about digraph colorings.

\begin{theorem}[Mohar \cite{mohar_eigen_2009}]
\label{thm:moharkcritical}
If $D$ is a $(k, 1)$-critical oriented graph in which each vertex $v$ satisfies $d^+(v) = d^-(v) = k-1$, then $k \leq 2$.
\end{theorem}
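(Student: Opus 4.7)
My plan is to argue by contradiction: assume $D$ is a $(k,1)$-critical oriented graph in which $d^+(v)=d^-(v)=k-1$ for every vertex and $k\geq 3$, and then construct an acyclic $(k-1)$-coloring of $D$, directly contradicting $\chi_A(D)=k$. Because $k\geq 3$, I may invoke Lemma \ref{lem:2separator} (with $m=1$) to obtain vertices $u_1,u_2,u_n$ such that $u_1,u_2$ are both in-neighbors or both out-neighbors of $u_n$, and such that $D':=D-\{u_1,u_2\}$ is weakly connected. By symmetry I will assume $u_1,u_2$ are both in-neighbors of $u_n$.

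Using the weak connectedness of $D'$, I next build a BFS tree of the underlying undirected graph of $D'$ rooted at $u_n$, and list the vertices of $V(D')\setminus\{u_n\}$ as $v_1,v_2,\ldots,v_{n-3}$ in order of strictly \emph{decreasing} BFS-distance from $u_n$. I then color $V(D)$ in the order
\[
u_1,\; u_2,\; v_1,\; v_2,\; \ldots,\; v_{n-3},\; u_n,
\]
putting $u_1$ and $u_2$ both into color class $1$, then coloring each $v_i$ and finally $u_n$ greedily from the palette $\{1,\ldots,k-1\}$, always choosing a color that keeps the partial coloring acyclic.

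To justify feasibility at each greedy step, I will use the standard bound that when extending an acyclic coloring to a new vertex $v$, the number of \emph{forbidden} colors (those whose assignment to $v$ would complete a monochromatic directed cycle through $v$) is at most the minimum of the number of distinct colors on the already-colored in-neighbors of $v$ and the number of distinct colors on the already-colored out-neighbors. The pair $u_1,u_2$ is colored first with a common color, which is acyclic since the oriented hypothesis forbids a directed $2$-cycle on $\{u_1,u_2\}$. For each $v_i$, the BFS-parent of $v_i$ in $D'$ is a neighbor of $v_i$ in $D$ with strictly smaller BFS-distance than $v_i$, hence is still uncolored when $v_i$'s turn arrives; this parent is either an in-neighbor or an out-neighbor of $v_i$ in $D$, so either the already-colored in-neighbors of $v_i$ number at most $(k-1)-1=k-2$ or the already-colored out-neighbors do. In either case at most $k-2$ colors are forbidden and a color from $\{1,\ldots,k-1\}$ is available. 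Finally, $u_n$'s $k-1$ in-neighbors include $u_1$ and $u_2$, which share color $1$, so the in-neighbors of $u_n$ exhibit at most $k-2$ distinct colors and $u_n$ again has at most $k-2$ forbidden colors.

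The routine part is the BFS setup and the pigeonhole bookkeeping; the conceptually delicate step is ensuring that the bound on forbidden colors via distinct-color counts really captures acyclic-coloring obstructions (as opposed to ordinary proper coloring), which comes down to the observation that a monochromatic cycle through $v$ must traverse both an in-neighbor and an out-neighbor of $v$ having that color. The main obstacle to watch is that the "later-colored neighbor" argument for each $v_i$ relies on the BFS parent lying inside $D'$, which is exactly why I need Lemma \ref{lem:2separator} to guarantee that removing $u_1$ and $u_2$ preserves weak connectedness.
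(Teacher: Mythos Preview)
Your proposal is correct and follows essentially the same approach as the paper's proof of Theorem~\ref{thm:kmcritical} (of which Mohar's theorem is the $m=1$ special case): invoke Lemma~\ref{lem:2separator} to get $u_1,u_2,u_n$ with $D-\{u_1,u_2\}$ weakly connected, order the remaining vertices so that each has a neighbor later in the order, and color greedily with $k-1$ colors. The only cosmetic differences are that you make the ordering explicit via a BFS tree and you verify acyclicity step by step via the ``a forbidden color must appear on both an in-neighbor and an out-neighbor'' observation, whereas the paper verifies $m$-degeneracy at the end by noting that in any monochromatic subset the last-colored vertex has in-degree or out-degree $0$; for $m=1$ these bookkeeping styles are equivalent.
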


In the following lemma, we prove that if an oriented graph $D$ is $(k,m)$-critical, then there is a set of $m+1$ in-neighbors or out-neighbors of a vertex which, when removed, does not break weakly connectedness of $D$. In the proof of Theorem \ref{thm:moharkcritical} \cite{mohar_eigen_2009}, this fact for the case $m = 1$ was assumed to be true for all digraphs $D$, including those with digons, but not explicitly stated. 

\begin{lemma}
\label{lem:2separator}
Suppose $m \geq 1$, $k \geq 3$, and $D$ is a $(k, m)$-critical oriented graph on $n$ vertices in which each vertex $v$ satisfies $d^+(v) = d^-(v) = (k-1)m$. Then there exist vertices $u_1, u_2, \ldots, u_{m+1}, w \in V(D)$ such that $u_1, \ldots, u_{m+1}$ are all out-neighbors or all in-neighbors of $w$ and the digraph induced by $V(D) - \{u_1, \ldots, u_{m+1}\}$ is weakly connected.
\end{lemma}

\begin{proof}
Suppose the lemma is false for some $(k,m)$-critical oriented graph $D$; by Lemma \ref{lem:trivial}, $D$ is weakly connected.  Let $X$ be a set of $m+1$ in-neighbors or out-neighbors of some vertex $w_0$ that maximizes the lexicographic size of the weakly connected components of $D - X$, when listed from largest to smallest; the size of a weakly connected component is its number of vertices. If $D - X$ has a single weakly connected component, then taking $w = w_0$ and $u_1, \ldots, u_{m+1}$ as the $m+1$ vertices in $X$ satisfies the statement of the lemma. Otherwise, let $C$ be the smallest weakly connected component of $D - X$, and $C'$ be another weakly connected component. 

Some $x \in X$ must be adjacent to $C'$, or else $C'$ would be its own weakly connected component in $D$. Next pick any vertex $c \in C$, and notice that all neighbors of $c$ are contained in $C \cup X$.  Assume $x$ is not an in-neighbor of $c$; the case $x$ is not an out-neighbor of $c$ is symmetric, by reversing all edges. Let $Y$ be an arbitrary set of $m+1$ in-neighbors of $c$. Since $Y \subseteq C \cup X$, each component of $D - X$ that is not $C$ is also weakly connected in $D - Y$. By maximality of the lexicographic sizes of the components of $D - X$, and since $C$ is a smallest weakly connected component of $D-X$, each weakly connected component of $D - X$ that is not $C$ must not contain any additional vertices in $D - Y$. But $x \in D-Y$ is in the same weakly connected component as $C'$, which is a contradiction.
\end{proof}

We now prove Theorem $\ref{thm:kmcritical}$, using Lemma \ref{lem:2separator}.

\begin{proof}[Proof of Theorem \ref{thm:kmcritical}] We assume that $k \geq 3$ for the purpose of contradiction and create a linear ordering of the vertices of $D$, as follows. Pick a vertex $w \in D$ and choose a set $U = \{u_1, u_2, \ldots, u_{m+1}\}$ of $m+1$ vertices in the in-neighborhood or out-neighborhood of $w$ so that the digraph  $D' := D - U$ is weakly connected. This construction is possible by Lemma \ref{lem:2separator}. We will now form a {\it degeneracy ordering} of the vertices of $D - w$, which is an ordering such that each vertex has strictly fewer than $(k-1)m$ in-neighbors or $(k-1)m$ out-neighbors before itself in the ordering. 
This degeneracy ordering also has the property that the first $m+1$ vertices are $u_1, \ldots, u_{m+1}$. We next order the remaining vertices in reverse order, starting with $u_{n-1}$ and ending with $u_{m+2}$.

Since $w$ has $(k-1)m$ in-neighbors and $(k-1)m$ out-neighbors, there is some $u_{n-1} \in D'$ such that $w$ is an out-neighbor or in-neighbor of $u_{n-1}$. Thus, $u_{n-1}$ has strictly fewer than $(k-1)m-1$ out-neighbors or in-neighbors in $D - w$. Now continue in a similar manner; namely, for each $i$, $n-2 \geq i \geq m+2$, since $D'$ is weakly connected there is some vertex $u_i \in V(D') \backslash \{u_{i+1}, u_{i+2}, \ldots, u_{n-1}, w\}$ which has an in-neighbor or out-neighbor in the set $\{u_{i+1}, u_{i+2}, \ldots, u_{n-1}, w\}$. Hence $u_i$ has out-degree or in-degree less than $(k-1)m$ in the digraph $D - \{u_{i+1}, u_{i+2}, \ldots, u_{n-1}, w\}$. Since, for $1 \leq i \leq m+1$, $u_i$ clearly has fewer than $(k-1)m$ in-neighbors or out-neighbors among the vertices $u_1, \ldots, u_{i-1}$, the construction of the degeneracy ordering is complete.


We now construct a $(k-1,m)$-degenerate coloring of $D$ as follows, which will contradict the fact that $\chi_m(D) = k$. We first give $u_1, u_2, \ldots, u_{m+1}$ the same color. 
Then for $m+2 \leq i \leq n-1$, we assign $u_i$ a color, as follows: in the subgraph of $D$ induced by $\{u_1, \ldots, u_i\}$, $u_i$ has in-degree or out-degree less than $(k-1)m$. Therefore, in the existing coloring of $\{u_1, \ldots, u_{i-1}\}$, one of the $k-1$ color classes contains fewer than $m$ in-neighbors or out-neighbors of $u_i$, and we give $u_i$ this color. 
Finally, to color $u_n$, note that $u_n$ has $m+1$ in-neighbors or out-neighbors, namely $u_1, \ldots, u_{m+1}$, of the same color, but exactly $(k-1)m$ in-neighbors and out-neighbors in total, so we can find a color present in fewer than $m$ in-neighbors or out-neighbors of $u_n$; we color $u_n$ this color. Note that the resulting coloring has the property that each vertex $u_i$ has fewer than $m$ in-neighbors or out-neighbors of the same color as $u_i$ which belong to $\{u_1, \ldots, u_{i-1}\}$.

We claim that each color class $C$ is $m$-degenerate. To show this, for any color class $C$ and  subset $S$ of the vertices colored $C$, pick $v \in S$ by $v = w$ if $w \in S$, and otherwise $v = u_i \in S$ such that $i$ is as large as possible. Then since $v$ is the vertex in $S$ that was colored last, $v$ has at most $m-1$ in-neighbors or out-neighbors in $S$, completing the proof.
\end{proof}


Our next lemma uses Lemma \ref{lem:critvertex} to extend Theorem \ref{thm:kmcritical} to digraphs that are not $(k, m)$-critical. Intuitively, this is possible because $(k, m)$-critical digraphs are the worst case for finding an $m$-degenerate coloring with few colors.

\begin{lemma}
\label{lem:brooksdeg}
Suppose that $m \geq 1$ and $\chi_m(D) = k + 1$, for some integer $k \geq 2$ and oriented graph $D$. Then ${\Delta}(D)/2 >   km$.
\end{lemma}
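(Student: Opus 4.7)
The plan is to reduce to a critical subgraph and then combine Lemma \ref{lem:critvertex} with Theorem \ref{thm:kmcritical}. Starting from $D$ with $\chi_m(D) = k+1$, I would iteratively delete any vertex whose removal does not decrease the $m$-degenerate chromatic number. Since the vertex set is finite this process must terminate, yielding a subgraph $D^* \subseteq D$ in which every remaining vertex is critical; hence $D^*$ is $(k+1,m)$-critical. Being an induced subgraph of the oriented graph $D$, $D^*$ is itself an oriented graph.

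Next, I would apply Lemma \ref{lem:critvertex} to $D^*$ (with the parameter ``$k$'' of that lemma instantiated as $k+1$), obtaining $d^+_{D^*}(v), d^-_{D^*}(v) \geq km$ for every $v \in V(D^*)$. I would then split into two cases. If equality holds everywhere, i.e.\ $d^+_{D^*}(v) = d^-_{D^*}(v) = km$ for all $v \in V(D^*)$, the hypotheses of Theorem \ref{thm:kmcritical} are met by $D^*$ with chromatic parameter $k+1$, which would force $k+1 \leq 2$, contradicting the assumption $k \geq 2$. Hence there exists a vertex $v_0 \in V(D^*)$ with $d^+_{D^*}(v_0) > km$ or $d^-_{D^*}(v_0) > km$. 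Combined with the lower bound from Lemma \ref{lem:critvertex}, this gives
\[
\frac{d^+_{D^*}(v_0) + d^-_{D^*}(v_0)}{2} > km.
\]

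Finally, since $D^*$ is an induced subgraph of $D$, we have $d^+_D(v_0) \geq d^+_{D^*}(v_0)$ and $d^-_D(v_0) \geq d^-_{D^*}(v_0)$, so the arithmetic mean of the in- and out-degrees of $v_0$ in $D$ is also strictly greater than $km$. Therefore $\bar{\Delta}(D) > km$, as required. I do not anticipate a substantive obstacle: the critical-subgraph reduction is standard, and the only point worth double-checking is that Theorem \ref{thm:kmcritical}'s exact-degree hypothesis is indeed the only remaining possibility after Lemma \ref{lem:critvertex} — which follows immediately from the fact that ``$\geq km$ on both sides with no strict inequality anywhere'' forces equality everywhere.
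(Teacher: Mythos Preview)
Your proof is correct and follows essentially the same route as the paper's: reduce to a $(k+1,m)$-critical oriented subgraph, apply Lemma~\ref{lem:critvertex} to get $d^+,d^-\geq km$ everywhere, and invoke Theorem~\ref{thm:kmcritical} to dispose of the case where equality holds at every vertex. The only cosmetic difference is that the paper frames the reduction as a minimal-counterexample argument (assume $\bar{\Delta}(D)\leq km$ and take a vertex-minimal such $D$, which is then forced to be critical), whereas you pass directly to a critical subgraph and push the strict degree inequality back up to $D$.
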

\begin{proof}
Fix $m \geq 1$.
Suppose for the purpose of contradiction that for some $k \geq 2$, there is an oriented graph $D$ with as few vertices as possible, such that $\chi_m(D) = k+1$ and ${\Delta}(D) \leq 2km$.  Notice that if $D$ were not $(k+1, m)$-critical, we could remove some vertex $v$ to form $D' = D - v$, and we would have $\chi_m(D') = k+1$ and ${\Delta}(D') \leq {\Delta}(D) \leq 2km$. This contradicts the fact that $D$ has as few vertices as possible such that ${\Delta}(D) \leq 2km$ holds. Hence $D$ is $(k+1, m)$-critical. 

By Lemma \ref{lem:critvertex}, for each $v \in V(D)$, we have that $d^+(v) \geq km$ and $d^-(v) \geq km$. In order to have ${\Delta}(D) \leq 2km$, we must have $d^+(v)  = d^-(v) = km$ for all $v \in V(D)$. But then by Theorem \ref{thm:kmcritical}, we have that $k+1 \leq 2$, contradicting the fact  that $k \geq 2$.
\end{proof}

The following corollary  follows from Lemma \ref{lem:brooksdeg}. It is a directed analogue of a theorem of Borodin \cite{borodin_decomposition_1976}, which states that the $m$-degenerate chromatic number of an undirected graph $G$ is at most $\left\lceil \frac{\Delta(G)}{m}\right\rceil$ as long as $\Delta(G) \geq \max\{3, 2m, \omega(G)\}$, where $\omega(G)$ denotes the clique number of $G$.

\begin{corollary}
\label{cor:brookscor}
If $D$ is an oriented graph such that 
${\Delta}(D) > 2m$, then $\chi_m(D) \leq \left \lceil \frac{{\Delta}(D)}{2m}\right\rceil$.
\end{corollary}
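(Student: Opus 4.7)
The plan is to derive this corollary directly from Lemma \ref{lem:brooksdeg}, which is already strong enough that no further structural work is required. I would set $\ell := \chi_m(D)$ and aim to show $\ell \leq \lceil \bar{\Delta}(D)/m \rceil$.

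First I would observe that the hypothesis $\bar{\Delta}(D) \geq 2m$ forces $\lceil \bar{\Delta}(D)/m \rceil \geq 2$, so the desired inequality is automatic in the trivial range $\ell \leq 2$. In the remaining case $\ell \geq 3$, my plan is to write $\ell = k+1$ with $k := \ell - 1 \geq 2$, which puts us precisely in the hypothesis of Lemma \ref{lem:brooksdeg}. That lemma will then deliver $\bar{\Delta}(D) > (\ell-1)m$, so dividing by $m$ yields $\ell - 1 < \bar{\Delta}(D)/m \leq \lceil \bar{\Delta}(D)/m \rceil$. Since $\ell - 1$ and $\lceil \bar{\Delta}(D)/m \rceil$ are both integers, the strict inequality $\ell - 1 < \lceil \bar{\Delta}(D)/m \rceil$ collapses to $\ell \leq \lceil \bar{\Delta}(D)/m \rceil$, which is the conclusion.

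The only place that looks like an obstacle is really just bookkeeping about where the trivial greedy bound $\chi_m(D) \leq \lfloor \bar{\Delta}(D)/m \rfloor + 1$ is and is not sharp. The two bounds coincide whenever $\bar{\Delta}(D)/m$ is not an integer, and differ by one precisely when $\bar{\Delta}(D)/m \in \mathbb{Z}$; in that integer case, Lemma \ref{lem:brooksdeg} is exactly what closes the gap by ruling out $\chi_m(D) = \bar{\Delta}(D)/m + 1$. Apart from this observation, there is nothing more to check, and the corollary follows from a single invocation of the lemma with the shift $k \mapsto \ell - 1$.
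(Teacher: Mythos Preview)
Your proof is correct and follows essentially the same approach as the paper: both arguments reduce immediately to a single application of Lemma~\ref{lem:brooksdeg}, with the paper phrasing it as a contradiction (set $k=\lceil \bar{\Delta}(D)/m\rceil$ and rule out $\chi_m(D)\geq k+1$) while you phrase it directly (set $k=\chi_m(D)-1$ and bound $\chi_m(D)$). The extra paragraph about the greedy bound is commentary rather than part of the argument, but it does not affect correctness.
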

\begin{proof}
Let $k = \left\lceil \frac{{\Delta}(D)}{2m} \right\rceil$. Then $k \geq 2$. If $\chi_m(D) \geq k + 1$, then by Lemma \ref{lem:brooksdeg}, ${\Delta}(D) > 2km$, meaning that $\frac{{\Delta}(D)}{2m} > k$, a contradiction since $k = \left\lceil \frac{{\Delta}(D)}{2m} \right\rceil$.
\end{proof}



To prove Theorem \ref{thm:fracdelta}, we also use a well-known decomposition theorem of Lov\' asz \cite{lovasz_on_1966}. 

\begin{theorem}[Lov\' asz \cite{lovasz_on_1966}]
\label{thm:lovasz}
For an undirected graph $G$, suppose that for some $s \geq 1$ and non-negative integers $\Delta_1, \ldots, \Delta_s$, we have $\Delta(G) = -1 + \sum_{i = 1}^s (\Delta_i + 1)$. Then there is a partitioning of $V(G)$ into $s$ sets $V_i$ which induce subgraphs $G_i$ $(1 \leq i \leq s)$, such that $\Delta(G_i) \leq \Delta_i$.
\end{theorem}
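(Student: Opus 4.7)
The plan is to prove Lovász's decomposition theorem by an extremal argument on partitions, weighted so that the global degree bound $\Delta(G) = (s-1) + \sum_i \Delta_i$ exactly matches a per-vertex swap inequality. Specifically, I will consider partitions $V(G) = V_1 \sqcup \cdots \sqcup V_s$ (which yield coverings by setting $G_i = G[V_i]$) and select one minimizing the weighted sum
\[
\Phi(V_1,\ldots,V_s) \;=\; \sum_{i=1}^{s} \frac{e(G[V_i])}{\Delta_i + 1},
\]
where $e(H)$ denotes the number of edges in a graph $H$. Since the set of partitions is finite, a minimizer exists. I will then show that in this minimizer, every vertex has few neighbors in its own part.

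The key step is a local swapping argument. Suppose for contradiction that some vertex $v \in V_i$ has $d_i(v) \geq \Delta_i + 1$, where $d_k(v)$ denotes the number of neighbors of $v$ in $V_k$. For any other part index $j \neq i$, form the partition obtained by moving $v$ from $V_i$ to $V_j$; this changes $\Phi$ by exactly $-\tfrac{d_i(v)}{\Delta_i + 1} + \tfrac{d_j(v)}{\Delta_j + 1}$. By minimality of $\Phi$, this difference must be nonnegative, so
\[
\frac{d_j(v)}{\Delta_j + 1} \;\geq\; \frac{d_i(v)}{\Delta_i + 1} \;\geq\; 1,
\]
whence $d_j(v) \geq \Delta_j + 1$ for every $j \neq i$. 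Summing over all parts (including $j = i$) gives
\[
\deg_G(v) \;=\; \sum_{j=1}^{s} d_j(v) \;\geq\; \sum_{j=1}^{s} (\Delta_j + 1) \;=\; s + \sum_{j=1}^{s} \Delta_j \;=\; \Delta(G) + 1,
\]
which contradicts $\deg_G(v) \leq \Delta(G)$. Therefore in the minimizing partition every $v \in V_i$ satisfies $d_i(v) \leq \Delta_i$, so $\Delta(G[V_i]) \leq \Delta_i$ for each $i$, yielding the desired covering.

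The main obstacle, and the reason the result is not immediate from minimizing just the total number of intra-part edges, is choosing the right weights in $\Phi$: the denominators $\Delta_i + 1$ are calibrated precisely so that the swap inequality thresholds each $d_j(v)$ at $\Delta_j + 1$, and the identity $\sum_j (\Delta_j + 1) = \Delta(G) + 1$ coming from the hypothesis on $\Delta(G)$ then closes the contradiction. Once this calibration is identified, the remaining verification is routine bookkeeping of how $e(G[V_i])$ and $e(G[V_j])$ change under a single-vertex move. No further machinery is needed, and the proof makes no use of directed-graph concepts; the theorem is cited from Lovász's 1966 paper and invoked as a black box in the proof of Theorem~\ref{thm:fracdelta}.
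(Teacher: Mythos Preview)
Your proof is correct; the weighted extremal argument with $\Phi(V_1,\ldots,V_s)=\sum_i e(G[V_i])/(\Delta_i+1)$ and the single-vertex swap is precisely the standard proof of Lov\'asz's decomposition lemma, and all the bookkeeping checks out. Note, however, that the paper does not actually prove Theorem~\ref{thm:lovasz}: it is quoted from Lov\'asz (1966) and used as a black box to derive Corollary~\ref{cor:dirlov}, as you yourself observe in your final paragraph. So there is no ``paper's own proof'' to compare against here; the closest the paper comes is the proof of Lemma~\ref{lem:modlov}, which runs an analogous extremal-partition-plus-swap argument (maximizing $f(V_1,\ldots,V_s)$ rather than minimizing a weighted intra-edge count) in the directed setting.
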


%
%
%

We now prove Theorem \ref{thm:fracdelta}, using Lemma \ref{lem:brooksdeg} and Theorem \ref{thm:lovasz}  to find an upper bound on $\chi_m(D)$.

\begin{proof}[Proof of Theorem \ref{thm:fracdelta}]
Notice that if $\Delta(D) < 4m$, then we have
\begin{equation}
\chi_m(D) \leq \left\lfloor \frac{\Delta(D)}{2m} \right\rfloor + 1 = \left\lfloor \frac{\Delta(D) - \left\lfloor \frac{\Delta(D) + 1}{4m + 1} \right\rfloor}{2m} \right\rfloor + 1\nonumber,
\end{equation}
so we assume from here on that $\Delta(D) \geq 4m$.

To find an upper bound on $\chi_m(D)$, we will use Theorem \ref{thm:lovasz} to show that the vertices of  $D$ can be partitioned into several subsets, each inducing a subgraph $D_i \subset D$, such that ${\Delta}(D_i)$ is small (at most $4m$). We will then apply Lemma \ref{lem:brooksdeg} to show that $\chi_m(D_i)$ is also small, meaning that we may color each $D_i$ with its own set of $\chi_m(D_i)$ colors to obtain a composite coloring which is also clearly $m$-degenerate; this argument gives the bound \mbox{$\chi_m(D) \leq \sum_{i} \chi_m(D_i)$}. 

We first set
\begin{equation}
t = \left\lfloor \frac{{\Delta}(D) + 1}{4m + 1} \right\rfloor, \quad r = {{\Delta}(D) + 1 - t(4m + 1)}. \nonumber
\end{equation} 
 It is clear that $r \geq 0$. Notice that ${\Delta}(D) = -1 + \left(\sum_{i = 1}^t 4m+1\right) + r$, meaning that, if $r \geq 1$, we may apply Theorem \ref{thm:lovasz} with $s = t+1$, $\Delta_i = 4m$ for $1 \leq i \leq t$, and $\Delta_{t+1} = r-1$. If $r = 0$, then we apply Theorem \ref{thm:lovasz} with $s = t$ and $\Delta_i = 4m$ for $1 \leq i \leq t$. Hence, if $r \geq 1$, the vertices of $D$ can be partitioned into $t + 1$ sets inducing subgraphs $D_1, \ldots, D_{t+1}$ (if $r = 0$, then $t$ sets inducing subgraphs $D_1, \ldots, D_t$), which satisfy:
\begin{equation}
\label{eq:deltadi}
{\Delta}(D_i) \leq
\begin{cases}
4m \quad\mbox{ if }\quad 1 \leq i \leq t\\
r - {1} \quad \mbox{ if } \quad r \geq 1\mbox{ and } i = t+1.
\end{cases}
\end{equation}

By Lemma \ref{lem:brooksdeg}, for $1 \leq i \leq t$, if $\chi_m(D_i) \geq 3$, then we would have that ${\Delta}(D_i) > 4m$, a contradiction to (\ref{eq:deltadi}). So $\chi_m(D_i) \leq 2$ for $1 \leq i \leq t$. If $r \geq 1$, we recall also the trivial bound $\chi_m(D_{t+1}) \leq \left\lfloor \frac{{\Delta}(D_{t+1})}{2m}\right \rfloor + 1$. Therefore we have, by (\ref{eq:deltadi}),
\begin{equation}
\label{eq:chimdi}
\chi_m(D_i) \leq \begin{cases}
2 \quad \mbox{ if } \quad 1 \leq i \leq t\\
1 + \left\lfloor \frac{r-1}{2m}\right\rfloor \quad \mbox{ if } \quad r \geq 1 \mbox{ and } i = t + 1 .
\end{cases}
\end{equation}
Notice that if $r = 0$, then $1 + \lfloor (r-1)/(2m) \rfloor = 0$.
Combining the individual $m$-degenerate colorings of $D_i$ into an $m$-degenerate coloring of $D$, we obtain
\begin{eqnarray}
\chi_m(D)& \leq& \sum_{i = 1}^{t+1} \chi_m(D_i)\nonumber\\
&\leq& 2 t + \lfloor (r - 1)/(2m) \rfloor + 1 \nonumber\\
&=& 2 \cdot \left\lfloor \frac{{\Delta}(D) + 1}{4m + 1} \right\rfloor + \left \lfloor \frac{{\Delta}(D) - (4m + 1) \cdot \left\lfloor \frac{{\Delta(D)} + 1}{4m + 1} \right\rfloor}{2m}\right\rfloor + 1\nonumber\\
&=& \left\lfloor \frac{{\Delta}(D)}{2m} -\left(\frac{4m + 1}{2m} -2 \right)  \left\lfloor \frac{{\Delta}(D) + 1}{4m + 1} \right\rfloor\right\rfloor + 1\nonumber\\
&=& \left\lfloor \frac{{\Delta}(D)  -  \left\lfloor \frac{{\Delta}(D) + 1}{4m + 1} \right\rfloor}{2m}\right\rfloor + 1\nonumber,
\end{eqnarray}
where we have used (\ref{eq:chimdi}) in the second inequality and the definition of $t$ and $r$ in the subsequent equality.
\end{proof}
{\bf Remark.} If, in applying Theorem \ref{thm:lovasz}, we had tried to set each $\Delta_i$ to an integer smaller than $4m$ in an attempt to further decrease $\chi_m(D_i)$, it would not necessarily be true that $\chi_m(D_i) \leq 1$; hence our final bound would not improve. However, for a certain class of digraphs in the case $m = 1$, this strategy will work, as we investigate in Section \ref{sec:betbounds}.

Theorem \ref{thm:fracdelta} and its Corollary \ref{cor:acyclic} give bounds on $\chi_m(D)$ in terms of $\Delta(D) = 2\bar{\Delta}(D) \geq 2\tilde{\Delta}(D)$. In order to directly compare our bond on $\chi_A(D) = \chi_1(D)$ to the bound of $\chi_A(D) \leq (1 - e^{-13}) \cdot \tilde{\Delta}(D)$ by Harutyunyan and Mohar, we need the following theorem.

\begin{theorem}
\label{thm:convertgeo}
Fix $m \geq 1$.
Suppose $\mathcal{D}$ is a set of digraphs which is closed under taking induced subgraphs, and that $a, b, c \in \mathbb{R^{\geq \mbox{\footnotesize$0$}}}$ with $b \geq 1$ and $0<a \leq 1/2$. Suppose further that for each $D \in \mathcal{D}$ with $\Delta(D) \geq c$, we have 
\begin{equation}
\label{eq:arithmeangiv}
\chi_m(D) \leq \left\lfloor a \cdot \frac{\Delta(D)}{m} + b\right\rfloor.
\end{equation}
Then for each $D \in \mathcal{D}$ with $\tilde{\Delta}(D) \geq c \cdot 1/2 \cdot \sqrt{(1-a)/a}$, we have $$\chi_m(D) \leq \left\lfloor \sqrt{\frac{a}{1-a}}\cdot \frac{\tilde{\Delta}(D)}{m} + b\right\rfloor.$$

\end{theorem}

\begin{proof}
Set $r = \sqrt{a/(1-a)}$, and pick any $D \in \mathcal{D}$ with $\tilde{\Delta}(D) \geq \frac{c}{2r}$. Our strategy is to use (\ref{eq:arithmeangiv}) on a subgraph $D'$ of $D$ that is induced by the vertices of $D$ which have large out-degree and in-degree. Such vertices cannot have too large a total degree, or else the geometric mean of the in-degree and out-degree of such vertices would be greater than $\tilde\Delta(D)$. This argument implies the existence of a coloring of $D'$ with a relatively small number of colors, and then we can greedily extend the coloring to the remaining vertices of $D$ by using the fact that either the in-degree of out-degree of each vertex in $V(D) \backslash V(D')$ is small.

Let $D'$ be the subgraph induced by the set of all $v \in V(D)$ such that $d^+(v) > r \tilde \Delta(D)$ and $d^-(v) > r \tilde \Delta(D)$. By definition, $d^+(v)d^-(v) \leq \tilde \Delta(D)^2$ for each $v \in D$. Since the sum $d^+(v) + d^-(v)$ is maximized subject to the constraint $d^+(v)d^-(v) \leq \tilde \Delta(D)^2$ when $d^+(v)$ and $d^-(v)$ are as far apart as possible, we have, for each $v \in V(D')$, 
\begin{equation}
d^+(v) + d^-(v) \leq r \tilde \Delta(D) + \frac{\tilde \Delta(D)^2}{r \tilde \Delta(D)} = \left( r + \frac{1}{r} \right) \tilde \Delta(D)\nonumber.
\end{equation}
Hence ${\Delta}(D') \leq \left(r + \frac{1}{r} \right)\tilde \Delta(D)$. Since $\mathcal{D}$ is closed under taking induced subgraphs, $D' \in \mathcal{D}$. Moreover, we have that $\Delta(D') > 2r \tilde \Delta(D) \geq c$,
so by (\ref{eq:arithmeangiv}),
\begin{equation}
\label{eq:coloringdprime}
\chi_m(D') \leq \left\lfloor a \cdot \frac{{\Delta}(D')}{m} + b \right\rfloor \leq \left\lfloor \frac{a\left(r+\frac{1}{r}\right) \tilde \Delta(D)}{m}+b \right\rfloor.
\end{equation}
Next, given an $m$-degenerate coloring of $D'$ with some number $k$ of colors, we claim that we can extend this coloring to an $m$-degenerate coloring of $D$ with $\max\{k, \lfloor\lfloor r \tilde \Delta(D) \rfloor /m\rfloor + 1\}$ colors. Suppose we are given such a coloring of $D'$; we then color the vertices of $V(D) \backslash V(D')$ in any order, noting, for each vertex $v \in V(D) \backslash V(D')$, that either $d^+(v) \leq \lfloor r \tilde \Delta(D) \rfloor$ or $d^-(v) \leq \lfloor r \tilde \Delta(D) \rfloor$. Therefore, there are at most $\lfloor \lfloor r \tilde \Delta(D) \rfloor/ m \rfloor$ colors which are present in at least $m$ in-neighbors (or out-neighbors) of $v$. Thus, among either the out-neighbors or in-neighbors of $v$, there is some color not present, and we color $v$ that color. 

In the resulting coloring of $D$, let $H$ be the subgraph induced by the vertices of any given color. If $H'$ were some induced subgraph of $H$ with no vertex of in-degree or out-degree strictly less than $m$, then $H'$ must contain some vertex in $V(D) \backslash V(D')$. Pick the $v \in (V(D) \backslash V(D')) \cap V(H')$ which was colored last of all vertices of $H'$, and note that $v$ has at most $m-1$ in-neighbors or out-neighbors of the same color. Hence $H$ is weakly $m$-degenerate, and the resulting coloring of $D$ is indeed an $m$-degenerate coloring. 
Since (\ref{eq:coloringdprime}) gives an upper bound on the $m$-degenerate chromatic number of $D'$, the preceding argument shows that
\begin{eqnarray}
\chi_m(D) &\leq&  \max\left\{\lfloor \lfloor r \tilde \Delta(D) \rfloor/m+ 1 \rfloor, \left\lfloor  \frac{a\left(r+\frac{1}{r}\right) \tilde \Delta(D)}{m}+b \right\rfloor\right\} \nonumber\\
&\leq&  \left\lfloor \frac{\tilde \Delta(D)}{m} \cdot \max\left\{r, a \cdot \left( r + \frac{1}{r}\right)\right\} + b \right\rfloor\nonumber\\
&=&   \left\lfloor\frac{r \tilde \Delta(D)}{m} + b\right\rfloor\nonumber,
\end{eqnarray}
where we have used in the second inequality that $b \geq 1$ and in the following equality that 
\begin{eqnarray}
a\left(r+ \frac{1}{r}\right) = ra\left(1 + \frac{1}{r^2}\right) = ra\left(1 + \frac{1-a}{a}\right) =r\nonumber.
\end{eqnarray}
\end{proof}

It follows from Theorems \ref{thm:fracdelta} and \ref{thm:convertgeo} that we can obtain, for all $m \geq 1$, an upper bound on $\chi_m(D)$ in terms of $\tilde{\Delta}(D)$.
\begin{theorem}
\label{cor:mdegtilde}
Let $m$ be a positive integer. Then for any oriented graph $D$, we have
\begin{equation}
\chi_m(D) \leq \left\lfloor \sqrt{\frac{2m}{2m+1}} \cdot \frac{\tilde\Delta(D)}{m} + \frac{2}{4m+1} \right\rfloor + 1\nonumber.
\end{equation}
\end{theorem}
\begin{proof}
By Theorem \ref{thm:fracdelta}, we have that for any oriented graph $D$,
\begin{eqnarray}
\chi_m(D) &\leq&  \left\lfloor \frac{{\Delta}(D)  -  \left\lfloor \frac{{\Delta}(D) + 1}{4m + 1} \right\rfloor}{2m}\right\rfloor + 1\nonumber\\
&\leq& \left\lfloor \frac{2m}{4m+1} \cdot \frac{\Delta(D)}{m} + \frac{4m+3}{4m+1} \right\rfloor\nonumber.
\end{eqnarray}
The result now immediately follows from Theorem \ref{thm:convertgeo} with $a = (2m)/(4m+1)$, $b = (4m+3)/(4m+1)$, and $c = 0$. 
\end{proof}

By taking $m = 1$ in Theorem \ref{cor:mdegtilde}, we obtain Corollary \ref{cor:improvedbound}. 
Harutyunyan and Mohar \cite{strengthened_harut_2011} posed the question of determining the smallest integer $\Delta_0$ such that every oriented graph $D$ with $\lceil \tilde{\Delta}(D) \rceil = \Delta_0$ satisfies $\chi_A(D) \leq \Delta_0 - 1$. They showed that $\Delta_0$ exists and is at most some integer which is approximately equal to $10^{10}$, and they also proved that $\Delta_0 \geq 4$. From Corollary \ref{cor:improvedbound}, we have that $\Delta_0 \leq 8$:

\begin{corollary}
Any oriented graph $D$ with $\lceil \tilde{\Delta}(D) \rceil = 8$ has $\chi_A(D) \leq 7$.
\end{corollary}
\begin{proof}
By Corollary \ref{cor:improvedbound}, we have that if $\lceil \tilde\Delta(D) \rceil = 8$, then
\begin{equation}
\chi_A(D) \leq \left\lfloor\sqrt{2/3} \cdot \tilde{\Delta}(D) + \frac{7}{5}\right\rfloor \leq \left\lfloor\sqrt{2/3} \cdot 8 + \frac{7}{5}\right\rfloor = 7\nonumber.
\end{equation}
\end{proof}

\section{Strengthened bounds for $\chi_A(D)$ with forbidden subgraphs}
\label{sec:betbounds}
In this section, we show that the bound in Theorem \ref{thm:lovasz} can be improved for any digraph which does not contain any of the graphs shown in Figure \ref{fig:avoid} as an induced subgraph. This leads to an improvement to the bound in Theorem \ref{thm:fracdelta} in the case $m = 1$ for such digraphs.  We first introduce some notation. Given disjoint sets of vertices $V_1$ and $V_2$ which belong to a digraph $D$, we let $E(V_1, V_2) = E(V_2, V_1)$ be the set of all edges which connect a vertex in $V_1$ to a vertex in $V_2$. Moreover, we define $e(V_1, V_2) = |E(V_1, V_2)|$. Given a vertex $u$, we define $d_{V_1}^+(u)$ as the number of out-neighbors of $u$ in $V_1$, $d_{V_1}^-(u)$ as the number of in-neighbors of $u$ in $V_1$, and ${d}_{V_1}(u) = {d_{V_1}^+(u) + d_{V_1}^-(u)}$. Finally, we let $F_1, F_2, G_1$, and $G_2$ be the 4-vertex digraphs shown in Figure \ref{fig:avoid}. 

\begin{figure}

\begin{minipage}[b]{0.22\linewidth}
\begin{center}
$F_1$\\
\begin{tikzpicture}

\tikzset{vertex/.style = {shape=circle,draw,minimum size=1.5em}}
\tikzset{edge/.style = {->,> = latex'}}
\node[shape=circle,minimum size=1em,fill=gray] (w) at  (0,0) {};
\node[shape=circle,minimum size=1em,fill=gray] (y) at  (0,2) {};
\node[shape=circle,minimum size=1em,fill=gray] (q) at  (-2,0) {};
\node[shape=circle,minimum size=1em,fill=gray] (r) at  (-2,2) {};

\draw[edge] (q) to (r);
\draw[edge] (q) to (w);
\draw[edge] (r) to (y);
\draw[edge] (w) to (y);
\end{tikzpicture}
\end{center}

\end{minipage}
\hspace{0.01cm}
\begin{minipage}[b]{0.22\linewidth}
\begin{center}
$F_2$\\
\begin{tikzpicture}

\tikzset{vertex/.style = {shape=circle,draw,minimum size=1.5em}}
\tikzset{edge/.style = {->,> = latex'}}
\node[shape=circle,minimum size=1em,fill=gray] (w) at  (0,0) {};
\node[shape=circle,minimum size=1em,fill=gray] (y) at  (0,2) {};
\node[shape=circle,minimum size=1em,fill=gray] (q) at  (-2,0) {};
\node[shape=circle,minimum size=1em,fill=gray] (r) at  (-2,2) {};

\draw[edge] (q) to (r);
\draw[edge] (q) to (w);
\draw[edge] (r) to (y);
\draw[edge] (w) to (y);
\draw[edge] (w) to (r);
\end{tikzpicture}
\end{center}
\end{minipage}
\hspace{.01cm}
\begin{minipage}[b]{0.25\linewidth}
\begin{center}
$G_1$\\
\begin{tikzpicture}

\tikzset{vertex/.style = {shape=circle,draw,minimum size=1.5em}}
\tikzset{edge/.style = {->,> = latex'}}
\node[shape=circle,minimum size=1em,fill=gray] (w) at  (0,0) {};
\node[shape=circle,minimum size=1em,fill=gray] (y) at  (0,2) {};
\node[shape=circle,minimum size=1em,fill=gray] (q) at  (-2,0) {};
\node[shape=circle,minimum size=1em,fill=gray] (r) at  (-2,2) {};

\draw[edge] (q) to (r);
\draw[edge] (q) to (w);
\draw[edge] (r) to (y);
\draw[edge] (w) to (y);
\draw[edge] (y) to (q);
\end{tikzpicture}
\end{center}
\end{minipage}
\hspace{.01cm}
\begin{minipage}[b]{0.22\linewidth}
\begin{center}
$G_2$\\
\begin{tikzpicture}

\tikzset{vertex/.style = {shape=circle,draw,minimum size=1.5em}}
\tikzset{edge/.style = {->,> = latex'}}
\node[shape=circle,minimum size=1em,fill=gray] (w) at  (0,0) {};
\node[shape=circle,minimum size=1em,fill=gray] (y) at  (0,2) {};
\node[shape=circle,minimum size=1em,fill=gray] (q) at  (-2,0) {};
\node[shape=circle,minimum size=1em,fill=gray] (r) at  (-2,2) {};

\draw[edge] (q) to (r);
\draw[edge] (q) to (w);
\draw[edge] (r) to (y);
\draw[edge] (w) to (y);
\draw[edge] (w) to (r);
\draw[edge] (y) to (q);
\end{tikzpicture}
\end{center}
\end{minipage}
\caption{The digraphs $F_1, F_2, G_1$, and $G_2$.}
\label{fig:avoid}
\end{figure}
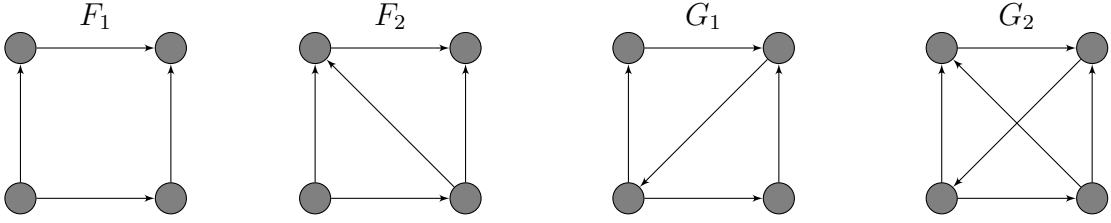

The following lemma shows that if $D$ contains none of $F_1, F_2, G_1$, or $G_2$ as induced subgraphs, then we can partition $V(D)$ into some number $s$ of sets which induce subgraphs $D_1, \ldots, D_s$, so that we may find an upper bound on $\chi_A(D_i)$ for each $i$. Notice that if the shortest directed cycle in $D$ is of length at least 4, then the constraint is relaxed to the condition that $D$ contains neither $F_1$ nor $F_2$ as an induced subgraph. The proof follows that of an undirected analogue proved by Catlin \cite{catlin_another_1978}. Both our proof and Catlin's proceed by picking a partition which maximizes the same function of the partition and then using the forbidden subgraphs to show that such a partition gives the desired bound on the chromatic number of each of $D_1, \ldots, D_s$. In Catlin's proof, the forbidden induced subgraphs were all 4-vertex graphs with a 4-cycle.

\begin{lemma}
\label{lem:modlov}
Suppose we are given a digraph $D$, and non-negative integers $s,{\Delta}_1, \ldots, {\Delta}_s$. Suppose further that ${\Delta}(D) = -2 + \sum_{i = 1}^s (\Delta_i + 1)$ and $D$ does not contain any of $F_1, F_2, G_1, G_2$ as an induced subgraph. Then there is a partitioning  of $V(D)$ into $s$ subsets $V_i$ which induce subgraphs $D_i$ ($1 \leq i \leq s$), such that $\Delta(D_i) \leq \Delta_i$ for all $i$ and $\chi_A(D_i) \leq \lceil{\Delta}_i/2\rceil$ for all $i$ with $\Delta_i > 0$.
\end{lemma}
\begin{proof}
Given a partition of $V(D)$ into $s$ subsets $V_1, \ldots, V_s$, define
\begin{equation}
\label{eq:defoff}
f(V_1, \ldots, V_s) = {\Delta}_1 |V_1| + {\Delta}_2 |V_2|+ \cdots + {\Delta}_s |V_s| + \sum_{1 \leq i < j \leq s} e(V_i, V_j).
\end{equation}
We let $A$ be the set of all $i$ such that ${\Delta}_i = 2$. Notice that if $i \in A$, then any directed cycle in $V_i$ must be both induced and a component of the subgraph induced by $V_i$. Following notation of Catlin \cite{catlin_another_1978}, we call any such directed cycle a {\it Brooks cycle}.
Now, choose a partition of the vertices of $D$ into subsets $V_1, \ldots, V_s$ so that, (i), $f(V_1, \ldots, V_s)$ is maximized, and (ii), the number of Brooks cycles is minimized, subject to (i). 

We first claim that any partition which maximizes $f$ has ${\Delta}(D_i) \leq {\Delta}_i$ for all $i$. Let $V_1, \ldots, V_s$ be a partition of $V(D)$ which maximizes $f$, and notice that for any $u \in V_1$, and for $2 \leq j \leq s$, moving $u$ from $V_1$ to $V_j$ must not increase the value of $f$:
\begin{equation}
\label{eq:from1toj}
f(V_1, \ldots, V_s) - f(V_1 -u, V_2, \ldots, V_j + u, \ldots, V_s) \geq 0.
\end{equation}
Using the definition of $f$ in (\ref{eq:defoff}), we obtain from (\ref{eq:from1toj}) that
\begin{eqnarray}
0&\leq&f(V_1, \ldots, V_s) - f(V_1 -u, V_2, \ldots, V_j + u \ldots, V_s)\nonumber\\
\label{eq:finalmove}
 &=& {e(V_1, V_j) - e(V_1 - u, V_j + u) } + {\Delta}_1 - {\Delta}_j\\
 &=& {d}_{V_j}(u) - {d}_{V_1}(u) + {\Delta}_1 - {\Delta}_j.\nonumber
\end{eqnarray}
Therefore, for $1 \leq j \leq s$, by (\ref{eq:finalmove}),
\begin{equation}
\label{eq:toavg}
{d}_{V_1}(u) \leq {\Delta}_1 - {\Delta}_j + {d}_{V_j}(u).
\end{equation}
By averaging (\ref{eq:toavg}) over all choices of $j$, $1 \leq j \leq s$, we obtain
\begin{equation}
{d}_{V_1}(u) \leq {\Delta}_1 - \frac{\sum_{j = 1}^s {\Delta}_j}{s} + \frac{{d}(u)}{s} = {\Delta}_1 - \frac{{\Delta}(D) - (s-2)}{s} + \frac{{d}(u)}{s}\nonumber.
\end{equation}
But ${d}(u) \leq {\Delta}(D)$, so the above implies that
\begin{equation}
{d}_{V_1}(u) \leq {\Delta}_1 + \frac{s-2}{s}\nonumber.
\end{equation}
Since ${\Delta}_1$ and ${d}_{V_1}(u)$ are integers, we must have that ${d}_{V_1}(u) \leq {\Delta}_1$. Since $u$ can be any vertex in $V_1$, we have ${\Delta}(D_1) \leq {\Delta}_1$. Moreover, we can repeat the above process with $V_1$ replaced by $V_i$, for $2 \leq i \leq s$, so ${\Delta}(D_i) \leq {\Delta}_i$.

For all $i$ with $\Delta_i \geq 3$, by Corollary \ref{cor:brookscor}, we have that $\chi_A(D_i) \leq \lceil{\Delta}(D_i)/2 \rceil$. For all $i$ with $\Delta_i = 1$, we clearly have that $\chi_A(D_i) = 1 = \lceil \Delta_i/2 \rceil$. It remains to consider those $i$ for which $\Delta_i = 2$, meaning $i \in A$. We claim that since the partition $V_1, \ldots, V_{s}$ minimizes the total number of Brooks cycles subject to the fact that $f(V_1, \ldots, V_s)$ is maximized,  the total number of Brooks cycles is 0, which implies that $D_i$ is acyclic for $i \in A$. For the purpose of contradiction, suppose there is some Brooks cycle $C_0 \subset V_i$, and let $v_0 \in V(C_0)$. If ${d}_{V_j}(v_0) \geq {\Delta}_j + 1$ for each $j \neq i$, then since ${d}_{V_i}(v_0) = {\Delta}_i = 2$,
\begin{equation}
{d}(v_0) \geq {\Delta}_i + \sum_{j \neq i} \left({\Delta}_j + 1\right) = {\Delta}(D) + 1  > {\Delta}(D)\nonumber,
\end{equation}
which is impossible. Hence there is some $j \neq i$ so that ${d}_{V_j}(v_0) \leq {\Delta}_j$. Thus, noting that ${d}_{V_i}(u) = {\Delta}_i$, and since
\begin{equation}
f(V_1, \ldots, V_s) - f(V_1, \ldots, V_i - u, \ldots, V_j + u, \ldots, V_s) = {d}_{V_j}(u) - {d}_{V_i}(u) + {\Delta}_i - {\Delta}_j\nonumber,
\end{equation} moving $v_0$ from $V_i$ to $V_j$ does not decrease $f(V_1, \ldots, V_s)$. Moreover, since moving $v_0$ removes the Brooks cycle $C_0$ from $V_i$, it must create a Brooks cycle $C_1$ in $V_j$. Therefore, by our definition of Brooks cycle, ${\Delta}_j = 2$, so $j \in A$. We then pick $v_1 \neq v_0$ such that $v_1 \in V(C_1)$, and repeat the process, creating an infinite sequence of Brooks cycles, $C_1, C_2, C_3, \ldots$. In particular, in the $t$th iteration of this process ($t \geq 1$), we create the Brooks cycle $C_t$. 

Note that in this sequence of Brooks cycles, any two adjacent cycles $C_t$ and $C_{t+1}$ must share a vertex, namely the vertex that is moved from $C_t$ to create $C_{t+1}$. Since the total number of vertices is finite, there must be some Brooks cycle $C_p$ that shares a vertex with a preceding Brooks cycle $C_q$ that does not immediately precede $C_p$, so that $q < p-1$. Choose $p$ to be as small as possible so that there exists such a $C_q$ with $q < p-1$ that shares a vertex with $C_p$.  
Also suppose that $C_p$ belongs to $V_k$, for some $k \in A$. Let $v$ be the vertex that is added to $C_p$ to form a complete cycle, and let $u$ be the vertex that is removed from $C_q$ during the $(q+1)$th iteration. Note that $u$ is moved from $C_q$ to cycle $C_{q+1}$ to complete it. We claim that $u \neq v$; if this were not so, then $C_{p-1}$ must contain $u$, in order for $u$ to be moved to $C_p$ during the $p$th iteration. But $p-1 > q+1$, as a vertex cannot be moved out of a cycle immediately after being moved in. Hence $C_{q}$ does not immediately precede $C_{p-1}$, yet they both contain $u$. This contradicts our choice of $p$. 

 Since the maximum degree of the subgraph induced by $V_k$ is 2, $v$ must have been attached to the endpoints of a directed path in $V_k$ to form $C_p$. Since the $p$th iteration is the first one during which two Brooks cycles intersect and neither immediately precedes the other, the exact same directed path must have been left behind when $u$ was removed from $C_q$ during the $q$th iteration of the process.   
 Let the vertices of this directed path be $x_1, \ldots, x_a$, for some $a \geq 2$. 

If $a = 2$, then the vertices $x_1, x_a, u$, and $v$ form an induced subgraph isomorphic to either $G_1$ or $G_2$, depending on whether there is an edge between $u$ and $v$. If $a \geq 3$, then since $C_p$ and $C_q$ are induced, there is no edge between $x_a$ and $x_1$, meaning that the vertices $x_1, x_a, u$, and $v$ form an induced subgraph isomorphic to either $F_1$ or $F_2$. In either case, we have a contradiction to the fact that $D$ contains no induced subgraph isomorphic to any of $F_1, F_2, G_1$, or $G_2$.


Thus there are no Brooks cycles, meaning that for $i \in A$, $D_i$ is acyclic, so $\chi_A(D_i) = 1 = \lceil{\Delta}_i/2\rceil$ for all $i \in A$.
\end{proof}

Our main theorem of this section improves the bound of $\chi_A(D) \leq 2/5 \cdot \Delta(D) + O(1) $ in Corollary $\ref{cor:acyclic}$ to $\chi_A(D) \leq 1/3 \cdot \Delta(D) + O(1)$ for oriented graphs $D$ which do not contain $F_1, F_2, G_1$, or $G_2$ as an induced subgraph.

\begin{theorem}
\label{thm:improvedfg}
Suppose $D$ is an oriented graph which does not contain any of $F_1, F_2, G_1, G_2$ as an induced subgraph.  Then 
\begin{equation}\chi_A(D) \leq \left\lfloor {1}/{3} \cdot {\Delta}(D) + 5/3\right\rfloor.\nonumber
\end{equation}
\end{theorem}

\begin{proof}
We use Lemma \ref{lem:modlov} to show that we can partition the vertices of $D$ into $\Delta(D)/3 + O(1)$ sets, each inducing an acyclic subgraph.

 Set
\begin{equation}
t = \left\lfloor \frac{{\Delta}(D) + 2}{3} \right\rfloor, \quad r = {\Delta}(D) + 2 - 3t\nonumber.
\end{equation} 
Then ${\Delta}(D) = -2 + \left(\sum_{i = 1}^t 3\right) + r$, meaning that if $r \geq 1$, by Lemma \ref{lem:modlov} with $s = t+1$, $\Delta_i = 2$ for $1 \leq i \leq t$, and $\Delta_{t+1} = r-1$, the vertices of $D$ can be partitioned into $t + 1$ sets inducing subgraphs $D_1, \ldots, D_{t+1}$, which satisfy:
\begin{equation}
\chi_A(D_i) \leq 
\begin{cases}
1 \quad \mbox{ if } \quad 1 \leq i \leq t \\
\max\left\{\left\lceil (r-{1})/{2}\right\rceil,1\right\} \quad \mbox{ if } \quad i = t+ 1.\nonumber
\end{cases}
\end{equation}
It is easy to see that $r \leq 2$, so $\lceil (r-1)/2 \rceil \leq 1$. If $r = 0$, then we use Lemma \ref{lem:modlov} with $s = t$ and $\Delta_i = 2$ for $1 \leq i \leq t$, so we have $t$ induced subgraphs $D_1, \ldots, D_t$ with $\chi_A(D_i) = 1$ for each $i$. 
Thus, for all $r \geq 0$, we may give the vertices of each induced subgraph $D_i$ a different color, and the resulting coloring of $D$ has no monochromatic directed cycles, so
\begin{equation}
\chi_A(D) \leq \sum_{i = 1}^{t+1} \chi_A(D_i) \leq t+1 = \left\lfloor \frac{\Delta(D) + 5}{3} \right\rfloor\nonumber.
\end{equation}
\end{proof}

By using Theorem \ref{thm:convertgeo} with $\mathcal{D}$ as the set of all oriented graphs which contain none of $F_1, F_2, G_1$, or $G_2$ as an induced subgraph, we obtain the below corollary of Theorem \ref{thm:improvedfg}.

\begin{corollary}
For any oriented graph $D$ which does not contain any of $F_1, F_2, G_1, G_2$ as an induced subgraph, $\chi_A(D) \leq \lfloor \sqrt{1/2} \cdot \tilde{\Delta}(D) + 5/3 \rfloor.$
\end{corollary}

\section{Concluding remarks}
In this paper, we have proven an upper bound on a generalization of the digraph chromatic number, the $m$-degenerate chromatic number. Moreover, the special case of $m = 1$ gives a bound on the digraph chromatic number which significantly improves previous bounds. However, the bound in Theorem \ref{thm:fracdelta} differs from the conjectured bound in Conjecture \ref{conj:everything} by a factor of $\log {\Delta}(D)$. It seems that a new technique is necessary to obtain the additional factor of $\log {\Delta}(D)$, if it is indeed correct. Moreover, it seems that Conjecture \ref{conj:everything} can be extended to the $m$-degenerate chromatic number.
\begin{conjecture}
There exists a universal constant $c$ such that for each positive integer $m$, every oriented graph $D$ has $\chi_m(D) \leq (c + o(1))\frac{({\Delta}(D)/m)}{\log ({\Delta}(D)/m)}$.
\end{conjecture}


\section{Acknowledgements}
I would like to thank David Rolnick for his helpful discussions and review of the paper. I would also like to thank Jacob Fox for suggesting the direction of research and Tanya Khovanova for helpful suggestions and review. Additionally, I thank anonymous referees for helpful comments, including for suggesting the current version of the proof of Lemma \ref{lem:2separator} and improving the exposition at the beginning of Section 2. I thank Pavel Etingof, David Jerison, and Slava Gerovitch for coordinating the research, and John Rickert for helpful review. Finally I would like to thank the Center for Excellence in Education,  the Research Science Institute and its alumni, and the MIT Math Department for their support.






\begin{thebibliography}{10}

\bibitem{acyclic_aharoni_2008}
Ron Aharoni, Eli Berger, and Ori Kfir.
\newblock Acyclic systems of representatives and acyclic colorings of digraphs.
\newblock {\em Journal of Graph Theory}, pages 177--189, 2008.

\bibitem{ben_size_2012}
Ido Ben-Eliezer, Michael Krivelevich, and Benny Sudakov.
\newblock The size {R}amsey number of a directed path.
\newblock {\em Journal of Combinatorial Theory, Series B}, 102:~743--755, 2012.

\bibitem{circular_bokal_2004}
Drago Bokal, Ga\v{s}per Fijav\v{z}, Martin Juvan, P.~Mark Kayll, and Bojan
  Mohar.
\newblock The circular chromatic number of a digraph.
\newblock {\em Journal of Graph Theory}, pages 227--240, 2004.

\bibitem{borodin_decomposition_1976}
Oleg Borodin.
\newblock On decomposition of graphs into degenerated subgraphs.
\newblock {\em Diskretnyj Analiz}, 28, 1976.

\bibitem{borodin_upper_1977}
Oleg Borodin and Alexandr Kostochka.
\newblock On an upper bound of a graph's chromatic number, depending on the
  graph's degree and density.
\newblock {\em Journal of Combinatorial Theory, Series B}, 23:~247--250, 1977.

\bibitem{brooks_coloring_1941}
Rowland Brooks.
\newblock On coloring the nodes of a network.
\newblock {\em Mathematical Proceedings of the Cambridge Philosophical
  Society}, 37:~194--197, 1941.

\bibitem{catlin_another_1978}
Paul Catlin.
\newblock Another bound on the chromatic number of a graph.
\newblock {\em Discrete Mathematics}, 24:~1--6, 1978.

\bibitem{catlin_bound_1978}
Paul Catlin.
\newblock A bound on the chromatic number of a graph.
\newblock {\em Discrete Mathematics}, 22:~81--83, 1978.

\bibitem{chen_coloring_2014}
Zhibin Chen, Jie Ma, and Wenan Zang.
\newblock Coloring digraphs with forbidden cycles.
\newblock {\em Journal of Combinatorial Theory, Series B}, 115:~210--223, 2015.

\bibitem{strengthened_harut_2011}
Ararat Harutyunyan and Bojan Mohar.
\newblock Strengthened {B}rooks' theorem for digraphs of girth at least three.
\newblock {\em Electronic Journal of Combinatorics}, 18:~P195, 2011.

\bibitem{two_harut_2012}
Ararat Harutyunyan and Bojan Mohar.
\newblock Two results on the digraph chromatic number.
\newblock {\em Discrete Mathematics}, 312(10):~1823--1826, 2012.

\bibitem{jamall_brooks_2011}
Mohammad~Shoaib Jamall.
\newblock A {B}rooks' theorem for triangle-free graphs.
\newblock {\em arXiv}, 1106.1958, 2011.

\bibitem{johannson_asymptotic}
A.~Johannson.
\newblock Asymptotic choice number for triangle free graphs.
\newblock {\em {U}npublished}.

\bibitem{digraph_keevash_2013}
Peter Keevash, Zhentao Li, Bojan Mohar, and Bruce Reed.
\newblock Digraph girth via chromatic number.
\newblock {\em SIAM Journal on Discrete Mathematics}, 27(2):~693--696, 2013.

\bibitem{kim_brooks_1995}
Jeong~Han Kim.
\newblock On {B}rooks' theorem for sparse graphs.
\newblock {\em Combinatorics, Probability, and Computing}, 4(2):~97--132, 1995.

\bibitem{lovasz_on_1966}
L\'aszl\'o Lov\'asz.
\newblock On decomposition of graphs.
\newblock {\em Studia Scientiarum Mathematicarum Hungarica}, 1:~237--238, 1966.

\bibitem{mohar_eigen_2009}
Bojan Mohar.
\newblock Eigenvalues and colorings of digraphs.
\newblock {\em Linear Algebra and its Applications}, 432:~2273--2277, 2010.

\bibitem{neumann_vertex_1985}
Victor Neumann-Lara.
\newblock Vertex colorings in digraphs, {S}ome problems.
\newblock {\em Seminar notes}, 1985.

\end{thebibliography}





\end{document}